\DeclareMathOperator{\Rad}{Rad}
\let\baraccent=\= 
\renewcommand{\=}[1]{\stackrel{#1}{=}} 
\providecommand{\RR}{\mathbb{R}}
\providecommand{\LL}{\mathbb{L}}
\providecommand{\ZZ}{\mathbb{Z}}
\providecommand{\cA}{\mathcal{A}}
\providecommand{\cB}{\mathcal{B}}
\providecommand{\cG}{\mathcal{G}}
\providecommand{\cN}{\mathcal{N}}
\providecommand{\cY}{\mathcal{Y}}
\providecommand{\PP}{\mathbb{P}}
\providecommand{\EE}{\mathbb{E}}
\mathchardef\mhyphen="2D 
\newcommand{\BSC}{\mathsf{BSC}}
\newcommand{\AWGN}{\mathsf{AWGN}}
\newcommand{\iid}{\stackrel{\mathrm{IID}}{\sim}}
\providecommand{\sm}{\setminus}
\newcommand{\interior}[1]{%
  {\kern0pt#1}^{\mathrm{o}}%
}
\newtheorem{theorem}{Theorem}[section]
\newtheorem{lemma}[theorem]{Lemma}
\newtheorem{definition}[theorem]{Definition}
\newtheorem{example}[theorem]{Example}
\newtheorem{claim}[theorem]{Claim}
\newtheorem{proposition}[theorem]{Proposition}
\newtheorem{corollary}[theorem]{Corollary}
\title{An Information-Percolation Bound for Spin Synchronization on General Graphs}
\author{Emmanuel Abbe\thanks{This work was partly supported by NSF CAREER Award CCF-1552131, NSF Center for the Science of Information CCF-0939370.}, Enric Boix\thanks{This work was partly supported by NSF Center for the Science of Information, CCF-0939370.} \\ Princeton University}
\date{}
\begin{document}
\maketitle

\begin{abstract}
    This paper considers the problem of reconstructing $n$ independent uniform spins $X_1,\dots,X_n$ living on the vertices of an $n$-vertex graph $G$, by observing their interactions on the edges of the graph. This captures instances of models such as (i) broadcasting on trees, (ii) block models, (iii) synchronization on grids, (iv) spiked Wigner models. The paper gives an upper-bound on the mutual information between two vertices in terms of a bond percolation estimate. Namely, the information between two vertices' spins is bounded by the probability that these vertices are connected in a bond percolation model, where edges are opened with a probability that ``emulates'' the edge-information. Both the information and the open-probability are based on the Chi-squared mutual information. The main results allow us to re-derive known results for information-theoretic non-reconstruction in models (i)-(iv), with more direct or improved bounds in some cases, and to obtain new results, such as for a spiked Wigner model on grids. The main result also implies a new subadditivity property for the Chi-squared mutual information for symmetric channels and general graphs, extending the subadditivity property obtained by Evans-Kenyon-Peres-Schulman \cite{evans} for trees.    
\end{abstract}

\section{Introduction}
{\bf The model.} We consider the problem of reconstructing $n$ uniform spins $X_1,\dots,X_n \iid \Rad(1/2)$ living on the vertices of an $n$-vertex graph $G$, by observing their interactions on the edges of the graph. Formal definitions are in Section \ref{sec:modeldefinitions}. 
Depending on the choices of the graph and the interaction channel, this captures models such as (1) broadcasting on trees \cite{ks1,evans} (2) censored block models \cite{heimlicherlelargemassoulie,abbs}, (3) synchronization on grids \cite{our_grid}, (4) spiked Wigner models \cite{yash_sbm}. Here we refer to these as synchronization problems on different graph/channel models. 

To set a running example, consider the case where $G=K_n$ is the complete graph, and where the channel on each edge is a binary symmetric channel that flips the product of the spins with probability $p \in [0,1]$, i.e., for each $1 \le u<v \le n$, we observe $Y_{u,v}=X_uX_vZ_{uv}$, where $(Z_{uv})_{1 \le u<j \le n}$ are i.i.d.\ $\Rad(p)$, mutually independent of $(X_u)_{u \in [n]}$.

Note that the above model is also related to the Ising model in statistical physics; conditioned on the edge observations, the posterior distribution of the vertex spins is given by an Ising model. However, we will be interested here in the average-case behavior over the edge variables in the model, while results on Ising models (e.g., Dobrushin conditions for correlation decay \cite{dobrushin1968problem}) typically focus on worst-case behavior over the edge variables.\\

\noindent
{\bf The problem.} Depending on how ``rich'' the graph is, and how ``noisy'' the channel is, one may or may not be able to obtain a non-trivial reconstruction of the spins. We focus here on understanding when it is information-theoretically impossible to obtain a non-trivial reconstruction. For this purpose, we are interested in conditions for which the mutual information between the spins $X_u,X_v$ of two arbitrary vertices $u,v \in [n]$, given all the edge interaction variables $(Y_e)_{e \in E(G)}$, is vanishing as $n$ diverges:
\begin{align}
    I_{KL}(X_u;X_v \mid (Y_e)_{e \in E(G)}) = o(1). \label{mic}
\end{align}
For the models mentioned above, this implies in particular that there is no estimator of the spins that solves the so-called weak recovery problem, i.e., that gives an asymptotic correlation with the ground truth that is non-trivial; see \cite{abbe_fnt} for discussions on weak recovery. 

For the running example, if $p$ is bounded away from $1/2$, then for any pair of vertices, the information on their direct edge suffices to prevent \eqref{mic} to take place. If $p$ tends to $1/2$ fast enough, this may break down, but it is not enough to inspect the direct edge as the information may propagate along other paths in the graph.  \\

\noindent
{\bf Known techniques.} Different techniques have been developed to upper-bound quantities such as the mutual information of \eqref{mic}. In particular,
\begin{itemize}
    \item (i) {\bf Upgrading the graph.} This approach was developed for instance for the broadcasting on trees (BOT) problem in \cite{evans}. In the BOT model, a random variable is broadcast from the root down the edges of a tree, with each edge potentially flipping the variable, and the goal is to reconstruct the root variable from the leaf variables at infinite depth. See Section \ref{subsec:broadcastingontrees1} for formal definitions. One can view this as synchronization problem using an extra vertex that interacts noiselessly with all the leaf variables; see Section \ref{subsec:broadcastingontrees1} for the formal connection. To upper-bound the mutual information (corresponding to \eqref{mic}) from the root to the leaves in the case of binary variables and symmetric channels, \cite{evans} shows a {\it subadditivity} property of the mutual information over all paths from the root to the leaves, which implies the impossibility part (the ``difficult'' part) of the KS threshold. This subadditivity is a crucial component to establish the uniqueness of a threshold in this context, and is proved in \cite{evans} using an upgradation of the BOT ensemble on an arbitrary tree to a BOT ensemble on a ``stringy'' tree, where the branches of the tree are ``separated''. One of the open problems/directions mentioned in \cite{evans} is to extend such results to more general graphs that contain cycles, finding the right model. Part of the results in this paper can be viewed as such an attempt.
    
    \item (ii) {\bf Using an oracle to change the graph.} This approach was developed for instance for the stochastic block model in \cite{mossel-sbm}. We consider here the close variant called the censored block model. Take an Erd\H{o}s-R\'enyi random graph in the sparse regime, $G_n \sim G(n,c/n)$, and on edge of the graph, observe the product of the adjacent spins on an independent $\BSC_{\varepsilon}$ (as in the running example). This gives an instance of the CBM. It models scenarios where one observes a random measurement that gives positive or negative indication that the two incident `people' are in the same community or not. The model is closely related\footnote{In the SBM, the presence of an edge makes the two incident vertices be in different communities with probability $\varepsilon=b/(a+b)$, and each vertex has an expected number of $(a+b)/2=c$ neighbors; the difference between the SBM and the CBM is that a non-edge in the SBM carries a slight repulsion probability towards having the incident vertices in the same community, although the latter is negligible in various aspects.} to the SBM$(n,a/n,b/n)$ (with the parameters $c=(a+b)/2$ and $\varepsilon=b/(a+b)$), where each vertex in the graph is connected by an edge with probability $a/n$ if the adjacent vertices are in the same community, and $b/n$ otherwise. To show that it is not possible to reconstruct the communities in the SBM, \cite{mossel-sbm} upper-bounds \eqref{mic} with an oracle that reveals the labels of the vertices at small depth from vertex $u$. Using then the fact that the Erd\H{o}s-R\'enyi model is locally tree-like, \cite{mossel-sbm} reduces the problem to the BOT model discussed previously. The same proof technique applies to the CBM, as also obtained in \cite{lelarge2015reconstruction}. Note that this proof technique is particularly helpful in the CBM/SBM because the local neighborhood of a vertex is ``simpler,'' i.e., tree-like, allowing us to reduce the model from a loopy graph to known results for trees \cite{evans}. Such an approach may not help in the model discussed next. 

    \item (iii) {\bf Upgrading the channel.} This approach was used for instance for the synchronization problem on grids in \cite{our_grid}. Consider the case of BSC channels as in the running example, that flip the spins' product with probability $p$ on each edge, and upgrade each channel with an erasure channel that instead erases the product with probability $2p$, revealing otherwise the exact value. This erasure model is clearly an upgradation of the BSC model, since one can always draw a random spin in replacement to an erasure symbol, which gives a BSC of flip probability $2p/2=p$. As further discussed below, for an erasure model, the mutual information in \eqref{mic} becomes exactly the probability that $u$ and $v$ are connected in a bond percolation model. In graph models like the grid, this has either a sharp threshold or some known bounds \cite{Grimmett}, and the overall approach gives a bound for synchronization problems on grids, developed in \cite{our_grid} beyond the case of BSCs. Note however that this approach is unlikely to give a sharp bound, due to the upgradation, but it allows for a direct application of percolation bounds.  
    
    \item (iv) {\bf Interpolation, message-passing and second-moments.} Interpolation techniques take different forms; one consists in establishing a bound between two quantities by parametrizing each quantity with a relevant parameter, typically a notion of signal-to-noise ratio (SNR), establishing the bound for the boundary cases, and interpolating other cases with a ``monotonicity'' argument (inspecting a derivative). This approach has long been used in  different contexts; for example, to establish the ``entropy power inequality'' in \cite{stam}. More closely related to us, it is used in \cite{abbetoc} to establish a subadditivity property of the mutual information of graphical channels, where the subadditivity acts on the vertex-set rather than the edge-set as considered here. For the spiked Wigner model with Radamacher inputs, which corresponds to a complete graph with a Gaussian noise channel, one can use the I-MMSE formula from \cite{i-mmse} to equate the derivative of the mutual information in \eqref{mic} to the MMSE, and express the latter using an approximate message passing (AMP) estimate \cite{amp}. This allows \cite{yash_sbm} to establish a limiting expression for the mutual information, and in particular, a tight condition for when the latter vanishes. Similar techniques have been used in various other spiked Wigner models, such as in \cite{wein1,wein_tensor,banks3}, and block models \cite{coja2}. It is worth noting that if the goal is to only obtain a condition for when the mutual information vanishes, it may not be necessary to employ such elaborate estimates. In particular, one may rely on second-moment estimates as used in \cite{banks,banks2} for block models and \cite{banks3,wein1} for Spiked Wigner models. Second-moment estimates typically give conditions on when the distribution of the planted ensemble (where the edge variables depend on the $X_i$ variables) is contiguous to the unplanted ensemble (where the edge variables are independent), and depending on the model, this can be turned into a condition for weak recovery being not solvable, such as in \cite{banks,banks2,banks3,wein1} (although the implication may not be true in general). 
\end{itemize}

{\bf This paper.} As apparent in previous discussion, some of the known techniques are fairly graph- and channel-dependent. The goal of this paper is to introduce a general method to upper-bound the mutual information \eqref{mic} in terms of bond percolation estimates, namely, in terms of the probability that vertices $u$ and $v$ are connected by an ``open'' path in a model where each edge of $G$ is kept open with some probability.  

Note that if the channel on each edge is an erasure channel, i.e., if $Y_{uv}=X_uX_v$ with probability $q$ and $Y_{uv}=\star$ (an erasure symbol) with probability $1-q$, then 
\begin{align}
    I_{KL}(X_u;X_v \mid (Y_e)_{e \in E(G)}) = \PP(u \sim v \mid (q)_{e \in E(G)}), \label{erasure}
\end{align}
where $\PP(u \sim v \mid (q)_{e \in E(G)})$ denotes the probability that $u$ and $v$ are connected in a bond percolation model on $G$ where each edge is open independently with probability $q$.

Our main result shows how to turn previous equality into an inequality beyond the case of erasures, covering a fairly general family of channels that contains models (1)-(4). The crucial part is to find how to set the openness probabilities on each edge in order to ``emulate'' the right amount of information, rather than using a degradation argument as discussed in (iii) above, that produces loose bounds on models (1)-(4). For this purpose, we will use an interpolation technique. In a sense, our bound can thus be viewed as an hybrid between the techniques of \cite{abbetoc} and \cite{our_grid}, as it uses an interpolation technique for a percolation bound. 

The main feature of the bound is that it applies to {\it any} graph. The derived bound subsumes the known results for (1)-(4) (with slight improvements for (3)) and gives also a few new results. These are presented in Section \ref{sec:applications}. Discussions on how the bound could be extended beyond the binary setting are provided in Section \ref{sec:futuredirections}. We underline here two aspects of the main results: 
\begin{itemize}
    \item {\bf A Chi-squared bound.} A natural attempt to estimate the information between two vertices in terms of the probability that these vertices are connected in a bond percolation model, is to open each edge with a probability that ``emulates'' the information of the edge. How should this be formalized?   
    
    Consider the case of $G=\Pi_n$, a path on the vertices $1,2,\dots,n$, with a binary symmetric channel (BSC) of flip probability $p=(1-\delta)/2$ on each edge as in the running example. The channel between the first and last vertex ($1$ and $n$) is a concatenation of BSCs, each with a flip probability either $p$ or $1-p$ (depending of the value of $Y_{i,i+1}$ for edge $(i,i+1)$). Thus we can explicitly compute the LHS of \eqref{mic}: 
\begin{align}
    I_{KL}(X_1;X_n \mid (Y_e)_{e \in E(\Pi_n)}) = 1- H((1-\delta^{n-1})/2). 
\end{align}
On the other hand, if we open each edge in the path with probability equal to the mutual information of a $\BSC_p$ (or $\BSC_{(1-p)}$), i.e., with $q=1-H((1-\delta)/2)$, vertex $u$ and $v$ are connected with probability 
\begin{align}
\PP(u \sim v \mid (q)_{e \in E(\Pi_n)}) = (1-H((1-\delta)/2))^{n-1}.
\end{align}
Unfortunately, this gives a bound that is in the reverse direction of \eqref{mic}! Note also that one can not hope for a general bound in this reverse direction for the mutual information (e.g., one can get a counter-example on a triangle-graph).  

In order to obtain a bound that holds for arbitrary finite graphs, we will change our measure of information, using not the KL-divergence but the Chi-squared divergence, i.e., 
\begin{align}
I_2(X;Y) :=D_{\chi^2}(p_{X,Y}\| p_X p_Y) 
\end{align}
where $D_{\chi^2}$ is the Chi-squared $f$-divergence with $f(t)=(t-1)^2$. In particular, it is easily shown that that for the path example, 
\begin{align}
    I_2(X_1;X_n|(Y_e)_{e \in E(\Pi_n)}) = \delta^{2(n-1)}. \label{chi-path}
\end{align}
Therefore, opening edge with probability equal to the Chi-squared mutual information of a $\BSC_{((1-\delta)/2)}$, i.e., $\delta^2$, gives the desired upper-bound with equality.

In general, we obtain that for any graph $G$ and for a class of symmetric channels on the edges,  
\begin{align}
    I_2(X_u;X_v \mid (Y_e)_{e \in E(G)}) \le \PP( u \sim v \mid (I_2(X_e;Y_e))_{e \in E(G)} ) \quad \text{{\bf (main result) }}, \label{main1}
\end{align}
where the RHS is the probability that $u$ and $v$ are connected in a bond percolation model on $G$ where edge $e$ is open with probability $I_2(X_e;Y_e)$, where $X_e$ denotes the product, $X_i \cdot X_j$, of the spins incident to edge $e = (i,j)$. 

Further, one can go back to obtain an upper-bound for the LHS in terms of the classical mutual information, since the latter is upper-bounded by the Chi-squared mutual information (for uniform binary variables); it is however important to keep the Chi-squared mutual information on the RHS. (See Lemma \ref{lem:evanschi2vsklonradhalf}.)

\item {\bf Subadditivity for general graphs.} Note that the RHS of \eqref{main1} can be upper-bounded with the union bound over all paths between $u$ and $v$, and using \eqref{chi-path}, we obtain as a corollary the following subadditivity property for general graphs:
\begin{align}
    I_2(X_u;X_v\mid (Y_e)_{e \in E(G)}) \le \sum_{\gamma \in \Gamma_G(u,v)} I_2(X_u;X_v\mid (Y_e)_{e \in E(\gamma)}), \label{main2}
\end{align}
where $\Gamma_G(u,v)$ denotes the set of paths (i.e., self-avoiding walks) from $u$ to $v$ in $G$. This gives an extension via the synchronization model of the subadditivity obtained for trees in \cite{evans} (see point (i) above) to general graphs.  
\end{itemize}

\section{Model}\label{sec:modeldefinitions}
We begin by defining a ``graphical channel'' similarly to the definition in \cite{abbetoc}, but tailored to the binary case:
\begin{itemize}
\item Let $g = (V,E(g))$ be a finite graph with vertex set $V = [n]$ and edge set $E(g)$.

\item For each $e \in E(g)$, let $Q_e(\cdot \mid \cdot)$ be a probability transition function (channel) from the binary input alphabet $\{-1,+1\}$ to an output alphabet $\cY_e$, such that $Q_{e\mid +}(\cdot) \equiv Q_e(\cdot \mid +1)$ and $Q_{e \mid -}(\cdot) \equiv Q_e(\cdot \mid -1)$ are probability measures on a measurable space $(\cY_e,\cA_e)$.

\item Assign a vertex label $x_i \in \{-1,+1\}$ to each vertex $i \in V$. Assign an edge label $y_e \in \cY_{e}$ to each edge $e \in E(g)$. Then define the channel $P_{g,Q}(\cdot \mid \cdot)$ with input alphabet $\{-1,+1\}^{V}$ and output alphabet $\{-1,+1\}^{E(g)}$ as follows: for each measurable set $A = \prod_{e \in E(g)} A_e \in \prod_{e \in E(g)} \cA_e$, let 
$$P_{g,Q}(A \mid x) \equiv \prod_{e \in E(g)} Q_e(A_e \mid x_e),$$ where we use the notation $x_e = x_u \cdot x_v$ for $e=(u,v)$.

\end{itemize}
\begin{definition}[Graphical channel for fixed graph]
Let $g,Q$ and $P_{g,Q}$ be as above. We call $P_{g,Q}$ a graphical channel with graph $g$ and channels $Q$.
\end{definition}

\begin{definition}[Graphical channel for random graph] Let $G = (V, E(G))$ be a random graph with vertex set $V = [n]$, and let $Q$ be a collection of edge channels (as above) so that for any edge $e$, $Q_e$ is defined if $\PP(e \in E(G)) > 0$. Let $P_{G,Q}$ be the random channel with output alphabet $\prod_{e \in E(G)} \cY_e$ and input alphabet $\{-1,+1\}^{V}$ given by $P_{g,Q}$ for each realization $G = g$.

\end{definition}

\begin{definition}[Binary synchronization instance] Let $P_{G,Q}$ be an $n$-node graphical channel, and let $X$ be uniformly drawn in $\{-1,+1\}^n$. Let $Y$ be the output of $X$ through the graphical channel $P_{G,Q}$. The pair $(X,Y)$ is an instance of a binary synchronization problem drawn from $P_{G,Q}$.
\end{definition}

\section{Main Results}
In this paper, we provide progress towards answering the following question: given a binary synchronization instance $(X,Y)$ drawn from $P_{G,Q}$, for $u,v \in V$, if we know $X_v$ and we know $Y$, then when is it impossible to reconstruct $X_u$?

\subsection{$\chi^2$-mutual information} In particular, we provide an upper-bound on the information that $X_v$ and $Y$ give about $X_u$. This information is quantified by the $\chi^2$-mutual information $$I_2(X_u; X_v, Y),$$ which is the $f$-mutual information based on the $\chi^2$-divergence --- see Section \ref{app:chi2mutualinformation} for a reminder on the definitions and properties of these functionals.  

\begin{proposition}\label{prop:chi2informationequivalences}
If $(X,Y)$ is a binary synchronization instance with underlying graph $G$, and $u,v \in V(G)$, then following equality holds:
\begin{equation}I_2(X_u; X_S, Y) = I_2(X_u; X_v \mid Y).\end{equation}
\end{proposition}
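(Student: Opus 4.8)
The plan is to reduce the identity (reading $X_S$ as $X_v$, i.e.\ $S=\{v\}$) to a single model-specific fact, namely that $X_u$ is \emph{marginally} independent of $Y$, and then invoke a general property of the $\chi^2$-mutual information. For the general property: for any random triple $(A,B,C)$, writing $D_{\chi^2}(P\|Q)=\mathbb{E}_P[\mathrm{d}P/\mathrm{d}Q]-1$ and using $p_{A,B\mid C}/(p_{A\mid C}\,p_{B\mid C})=p_{A\mid B,C}/p_{A\mid C}$, one gets
\begin{equation}
1+I_2(A;B,C)=\mathbb{E}_{(A,B,C)}\!\left[\frac{p_{A\mid B,C}}{p_A}\right],\qquad 1+I_2(A;B\mid C)=\mathbb{E}_{(A,B,C)}\!\left[\frac{p_{A\mid B,C}}{p_{A\mid C}}\right],
\end{equation}
where $I_2(A;B\mid C)$ is the $C$-averaged conditional $\chi^2$-information (for $\chi^2$ this coincides with $D_{\chi^2}(p_{A,B,C}\,\|\,p_{A\mid C}p_{B\mid C}p_C)$, so the convention is immaterial). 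Hence $I_2(A;B,C)=I_2(A;B\mid C)$ whenever $p_{A\mid C}=p_A$, i.e.\ whenever $A\perp C$. Applying this with $A=X_u$, $B=X_v$, $C=Y$ yields exactly the claimed equality, \emph{provided} $X_u\perp Y$. (When the output alphabets are not discrete, all ratios above are read as Radon--Nikodym derivatives with respect to a common dominating measure; this changes nothing.)

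It remains to verify $X_u\perp Y$, which is the only place the structure of the synchronization model enters. The key point is a gauge symmetry: the global sign flip $\sigma:x\mapsto -x$ on $\{-1,+1\}^V$ leaves every edge product invariant, since $(\sigma x)_e=(-x_i)(-x_j)=x_ix_j=x_e$ for $e=(i,j)$; therefore it leaves the channel likelihood $P_{g,Q}(y\mid x)=\prod_e Q_e(y_e\mid x_e)$ unchanged. Since $X$ is uniform on $\{-1,+1\}^V$ (hence invariant in law under $\sigma$) and independent of $G$, conditioning on $G=g$ shows that $(X_u,Y)$ and $(-X_u,Y)$ have the same conditional law given $G=g$; as $g$ is arbitrary and $Y$ determines $g$, we conclude that $X_u$ has the same conditional law given $Y=y$ as $-X_u$, i.e.\ $X_u$ is uniform conditioned on $Y$, i.e.\ $X_u\perp Y$. (By the same argument $X_v\perp Y$, although only $X_u\perp Y$ is needed above.)

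Putting the two steps together proves the proposition. I do not expect a genuine obstacle: the entire content is the observation that the global sign flip is a symmetry of the channel, which forces each individual vertex spin to be independent of the observations $Y$; after that the statement is a one-line manipulation of $f$-divergences. If one prefers a computation tailored to the binary setting, one can instead use the identity $I_2(X_u;Z)=\mathbb{E}_Z[\mathbb{E}[X_u\mid Z]^2]$, valid for uniform $X_u\in\{-1,+1\}$ and any $Z$: it gives $I_2(X_u;X_v,Y)=\mathbb{E}_{X_v,Y}[\mathbb{E}[X_u\mid X_v,Y]^2]$ directly, and the same expression arises for $I_2(X_u;X_v\mid Y)$ once $p_{X_u\mid Y}$ is known to be uniform. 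The only care needed is the routine measure-theoretic justification of the density manipulations when the $\cY_e$ are general measurable spaces.
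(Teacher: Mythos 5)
Your proof is correct, and it reaches the conclusion by a slightly different route than the paper. The paper's proof is a two-line application of its Proposition \ref{prop:correqualschi2} (the identity $I_2(A;U)=\EE_A[\EE_U[U\mid A]^2]$ for $U\sim\Rad(1/2)$), applied once with the conditioning variable $(X_S,Y)$ and once conditionally on $Y$; the step that makes the second application legitimate is exactly the fact you isolate, namely that $X_u$ remains $\Rad(1/2)$ conditioned on $Y$. You instead derive the equality from the general likelihood-ratio identity $1+I_2(A;B,C)=\EE[p_{A\mid B,C}/p_A]$ versus $1+I_2(A;B\mid C)=\EE[p_{A\mid B,C}/p_{A\mid C}]$, which buys you a statement valid for arbitrary (non-binary) $A$ with $A\perp C$, at the cost of not immediately producing the $\EE_Y[\EE[X_u X_v\mid Y]^2]$ form that the paper needs next (though your closing remark recovers it). The genuinely valuable addition in your write-up is the gauge-symmetry verification that $X_u\perp Y$: the paper asserts this independence without proof, whereas you justify it by noting that the global sign flip preserves every edge product $x_e$ and hence the channel likelihood, so $(X_u,Y)\overset{d}{=}(-X_u,Y)$. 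That argument is exactly right and is worth making explicit, since this independence is the single model-specific fact on which the proposition rests.
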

The $\chi^2$-mutual information takes the following simple expression:
\begin{proposition}\label{prop:chi2binarysynchexpectationexpression}
If $(X,Y)$ is a binary synchronization instance with underlying graph $G$, and $u,v \in V(G)$, then the following equality holds:
$$I_2(X_u; X_v \mid Y) = \EE_Y[\EE_X[X_u \cdot X_v \mid Y]^2].$$
\end{proposition}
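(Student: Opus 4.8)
The plan is to reduce the identity to a four-atom (really two-atom-pair) computation by exploiting the global sign (gauge) symmetry of the synchronization model. First I would unwind the definition of the conditional $\chi^2$-mutual information:
\[
I_2(X_u;X_v\mid Y)=\EE_Y\big[\,D_{\chi^2}\big(P_{X_uX_v\mid Y}\,\|\,P_{X_u\mid Y}\otimes P_{X_v\mid Y}\big)\,\big],
\]
so that it suffices to establish, for a.e.\ $y$, the pointwise identity $D_{\chi^2}\big(P_{X_uX_v\mid Y=y}\,\|\,P_{X_u\mid Y=y}\otimes P_{X_v\mid Y=y}\big)=\EE_X[X_uX_v\mid Y=y]^2$, and then apply $\EE_Y$.

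Second, I would record the gauge symmetry. Since the conditional law of $Y$ given $X=x$ depends on $x$ only through the edge products $(x_e)_e$ with $x_e=x_ux_v$, it is invariant under the global flip $x\mapsto -x$; combined with the uniform prior on $X$, Bayes' rule shows the posterior $P_{X\mid Y=y}$ is itself invariant under $X\mapsto -X$. Marginalizing to single coordinates, $P_{X_u\mid Y=y}$ and $P_{X_v\mid Y=y}$ are uniform on $\{\pm1\}$, and $P_{X_uX_v\mid Y=y}$ is invariant under $(X_u,X_v)\mapsto(-X_u,-X_v)$. Hence, writing $r:=\EE_X[X_uX_v\mid Y=y]$, the joint posterior is completely pinned down: it places mass $(1+r)/4$ on each of $(+,+),(-,-)$ and mass $(1-r)/4$ on each of $(+,-),(-,+)$, while the product $P_{X_u\mid y}\otimes P_{X_v\mid y}$ places mass $1/4$ on each of the four atoms.

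Third, the $\chi^2$-divergence becomes pure arithmetic: each of the four atoms contributes $(r/4)^2$ to the sum, so
\[
D_{\chi^2}\big(P_{X_uX_v\mid y}\,\|\,P_{X_u\mid y}\otimes P_{X_v\mid y}\big)=4\sum_{a,b\in\{\pm1\}}\big(P(X_u=a,X_v=b\mid y)-1/4\big)^2=16\,(r/4)^2=r^2,
\]
which is exactly $\EE_X[X_uX_v\mid Y=y]^2$; taking $\EE_Y$ completes the proof. (If desired, this pointwise step can alternatively be deduced from the scalar fact that $I_2(W;Z)=\EE_Z\,\EE[W\mid Z]^2$ whenever $W$ is uniform on $\{\pm1\}$, applied conditionally on $Y=y$ with $W=X_u$ and $Z=X_v$, again using the symmetry $m_{-1}=-m_1$ for $m_b:=\EE[X_u\mid X_v=b,Y=y]$.)

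I do not expect a genuine obstacle. The one load-bearing ingredient is the gauge symmetry, which is what forces the single-coordinate posteriors to be uniform and collapses the joint posterior onto the single scalar $r$; without it, $D_{\chi^2}$ would not reduce to $\EE[X_uX_v\mid Y]^2$. The only point requiring mild care is measure-theoretic bookkeeping when $Y$ is not discrete: one should work with regular conditional distributions $P_{X\mid Y=y}$, note that the $\chi^2$-divergence against the full-support product of uniforms is always well-defined (finite or $+\infty$), and observe that the outer $\EE_Y$ is precisely the definition of the conditional $f$-mutual information.
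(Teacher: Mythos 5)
Your proof is correct, but it takes a different route from the paper's. The paper never touches the pointwise $\chi^2$-divergence: it first identifies $I_2(X_u;X_v\mid Y)$ with the unconditional quantity $I_2(X_u;X_v,Y)$ (Proposition \ref{prop:chi2informationequivalences}), then reduces $I_2(X_u;X_v,Y)=I_2(X_u\cdot X_v;X_v,Y)=I_2(X_u\cdot X_v;Y)$ --- the first step by data processing (multiplication by $X_v$ is a bijection given $X_v$), the second because $X_v$ is independent of $(X_u\cdot X_v,Y)$ --- and finally applies the scalar identity $I_2(A;U)=\EE_A[\EE[U\mid A]^2]$ for $U\sim\Rad(1/2)$ (Proposition \ref{prop:correqualschi2}). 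You instead fix $Y=y$, use the global sign (gauge) symmetry to pin the posterior of $(X_u,X_v)$ down to the single parameter $r=\EE[X_uX_v\mid Y=y]$, and evaluate the four-atom $\chi^2$-divergence by hand; your arithmetic ($p_{++}=p_{--}=(1+r)/4$, $p_{+-}=p_{-+}=(1-r)/4$, uniform marginals, divergence $=r^2$) checks out. Note that the two arguments lean on the same underlying fact: your posterior flip-invariance is exactly what justifies the paper's claim that $X_v\perp(X_u\cdot X_v,Y)$. What the paper's route buys is a reusable structural identity ($I_2(X_u;X_v,Y)=I_2(X_u\cdot X_v;Y)$, i.e., only the product matters) and no need to discuss regular conditional distributions; what yours buys is a self-contained, fully explicit computation that makes the role of the gauge symmetry transparent.
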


The definition of $I_2$, and the proofs of Propositions \ref{prop:chi2informationequivalences} and \ref{prop:chi2binarysynchexpectationexpression}, can be found in Appendix \ref{app:chi2mutualinformation}.

\subsection{Bond percolation}
In our main result, we bound the Chi-squared mutual information $I_2(X_u; X_v \mid Y)$ by the connection probability between $u$ and $v$ in a bond percolation on the underlying graph, $G$.

\begin{definition}[Bond percolation on a graph] Let $G = (V,E(G))$ be a graph, and let $\gamma : E(G) \to [0,1]$. Then, a bond percolation with open probability $\gamma$ on $G$ is a random edge-labelling $$B : E(G) \to \{\mbox{open}, \mbox{closed}\},$$ such that each edge label $B(e)$ is assigned independently of the other edge labels, and such that for all $e$, $$\PP[B(e) = \mbox{open} \mid e \in E(G)] = \gamma_e.$$
\end{definition}

Let $B$ be a bond percolation on $G$. If a subgraph $H \subseteq G$ is such that $B(e) = \mbox{open}$ for all $e \in E(H)$, then we call $H$ an open subgraph.
\begin{definition}[Connection probability in percolation]
Let $S,T \subseteq V(G)$. Then we write their connection probability in a percolation on $G$ with open probability $\gamma$ as
$$\PP(S \sim T \mid \gamma).$$ This denotes the probability that there is a pair of vertices $u \in S$, $v \in T$, such that $u$ is connected to $v$ by an open path in a bond percolation on $G$ with open probability $\gamma$.
\end{definition}


\subsection{Symmetric channels}
Our information-theoretic bound for spin synchronization applies to ``symmetric'' graphical channels defined as follows.

\begin{definition}\label{def:symmetricgraphicalchannel}
A graphical channel $P_{G,Q}$ is symmetric if for each edge $e \in E(G)$ the channel $Q_e(\cdot \mid \cdot)$ is symmetric. An edge  channel $Q_e(\cdot \mid \cdot)$ is symmetric if there is a measurable transformation $T_e : \cY_e \to \cY_e$ on the output alphabet of $Q_e(\cdot \mid \cdot)$ such that $T_e = T_e^{-1}$, and such that for all measurable $A \subset \cA_e$ we have $$Q_e(A\mid+1) = Q_e(T_e(A)\mid-1),$$ and hence $$Q_e(T_e(A) \mid +1) = Q_e(A \mid -1).$$
\end{definition}

In other words, an edge channel $Q_e(\cdot \mid \cdot)$ is symmetric if ``flipping the sign'' using $T_e$ of an edge label $Y_e$ with distribution $Q_{e\mid +}$ gives an edge label $T_e(Y_e)$ with distribution $Q_{e\mid -}$.

Symmetric graphical channels cover a broad collection of models, discussed in Section \ref{sec:applications}.

\subsection{Information-percolation bound}

\begin{theorem}\label{thm:mainsymmetricbound}
Let $P_{G,Q}$ be a symmetric graphical channel, where $G = (V,E(G))$ is a random graph with vertex set $V = [n]$. Let $(X,Y)$ be a binary synchronization instance drawn from $P_{G,Q}$.

Then for all $u,v \in V$, $$I_2(X_u; X_v\mid Y) \leq \PP(u\sim v \mid \gamma),$$ where $$\gamma_{(i,j)} = I_2(X_i; X_j\mid Y_{(i,j)})$$ for all $(i,j) \in E(G)$.
\end{theorem}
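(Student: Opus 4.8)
The plan is to prove the bound by an interpolation argument over a family of channels that smoothly degrades each edge channel down to a pure erasure channel, for which equality \eqref{erasure} holds exactly. Concretely, for each edge $e$ I would introduce an independent erasure-like ``damping'' with a parameter $t_e \in [0,1]$: with probability $t_e$ the receiver of edge $e$ sees the true channel output $Y_e$, and with probability $1-t_e$ it sees an erasure symbol $\star$. Write $Y^{(t)}$ for the resulting (vector) output, and define $\Phi(t) = I_2(X_u; X_v \mid Y^{(t)})$. At $t_e = 0$ for all $e$, $\Phi = 0$; at $t_e = 1$ for all $e$, $\Phi$ is the quantity we want to bound. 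Using Proposition~\ref{prop:chi2binarysynchexpectationexpression}, $\Phi(t) = \EE[\,\EE[X_u X_v \mid Y^{(t)}]^2\,]$, which is a smooth (indeed multilinear-type) function of the $t_e$'s. The goal is to show that $\Phi(1,\dots,1) \le \PP(u \sim v \mid \gamma)$, where the right-hand side is also a multilinear function of auxiliary ``openness'' parameters. The natural route is to compare the two functions edge by edge: fix all coordinates but one, say $t_e$, and show that as a function of $t_e$ the left side grows no faster than a suitably coupled bond-percolation estimate in which edge $e$ is opened with probability $\gamma_e = I_2(X_e; Y_e)$.

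The key steps, in order, are: (1) set up the erased-channel family and verify that the erased channels are still symmetric, so that Propositions~\ref{prop:chi2informationequivalences} and~\ref{prop:chi2binarysynchexpectationexpression} apply at every value of $t$; (2) reduce to the path/tree structure locally — i.e., establish a single-edge inequality: for a channel that is the original edge channel $Q_e$ with probability $t_e$ and an erasure otherwise, its $\chi^2$-information equals $t_e \cdot I_2(X_e; Y_e)$ (this is the multiplicativity of $\chi^2$-information along an erasure, essentially the computation behind \eqref{chi-path}), which identifies the correct openness probability; (3) prove the ``chain rule''/tensorization step that lets us peel off one edge at a time: conditioning on whether edge $e$ is ``used,'' decompose $\EE[X_uX_v\mid Y^{(t)}]$ and bound the resulting cross terms, exploiting symmetry to kill the terms that would otherwise have the wrong sign (this is the place where the BSC-on-a-triangle counterexample for the KL version shows that $\chi^2$ is essential — the sign-definiteness of $\chi^2$-information is what makes the cross terms cooperate); (4) assemble the edgewise comparisons into the global inequality by comparing the two multilinear functions at the all-ones point, obtaining $\Phi(1,\dots,1) \le \PP(u \sim v \mid \gamma)$. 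An alternative to (3)–(4) that may be cleaner: directly expand $\EE[X_uX_v\mid Y]$ as a signed sum over subsets of observed edges using the symmetry (a Fourier/high-temperature expansion), square it, take expectations, and recognize the surviving diagonal terms as exactly a sum over pairs of edge-disjoint... more carefully, over connecting subgraphs, which is bounded by the percolation connection probability via a union bound over self-avoiding paths — this would simultaneously yield \eqref{main2}.

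The main obstacle I expect is step (3): controlling the cross terms when peeling off an edge. When we condition on the erasure pattern, $\EE[X_uX_v \mid Y^{(t)}]$ does not factor cleanly across the two ``sides'' of an edge once the graph has cycles, because the posterior correlations are globally coupled through the loops. On a tree this is exactly the Evans–Kenyon–Peres–Schulman situation and the recursion closes; the new content here is handling cycles, and the trick must be that in the $\chi^2$ (second-moment) computation the contribution of any subgraph that connects $u$ to $v$ is nonnegative and is dominated by summing over the paths it contains — so the union bound over $\Gamma_G(u,v)$ is lossy but valid, and the percolation probability repackages this union bound without the loss from overcounting multiple disjoint connecting paths. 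Making the ``dominated by paths'' claim precise — i.e., that the squared posterior expectation, expanded as a nonnegative sum over connecting structures, is bounded by the inclusion–exclusion-free percolation connectivity — is the crux, and I would handle it by the standard second-moment-method-style expansion with an explicit coupling of the ``which edges carry information'' process to the bond percolation process with edge-probabilities $\gamma_e$.
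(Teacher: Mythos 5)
There is a genuine gap, and it is in the choice of interpolation. Your damping parameter $t_e$ (reveal $Y_e$ w.p.\ $t_e$, erase otherwise) makes $\Phi(t)=\EE[\EE[X_uX_v\mid Y^{(t)}]^2]$ \emph{exactly multilinear} in the $t_e$'s: conditioning on the erasure pattern $S$ gives $\Phi(t)=\sum_S\prod_{e\in S}t_e\prod_{e\notin S}(1-t_e)\,I_2(X_u;X_v\mid Y_S)$, and the comparison function $\PP(u\sim v\mid (t_e\gamma_e)_e)$ is multilinear as well. Two multilinear functions are determined by their corner values, so "comparing edge by edge" reduces the inequality at $t=(1,\dots,1)$ to the corner comparisons; the corner $t_e=0$ deletes the edge (fine, fewer edges), but the corner $t_e=1$ is the \emph{original} channel on that edge, i.e., the very instance you are trying to bound. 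The recursion never closes, and because of exact multilinearity there is no convexity gain to exploit along your interpolation path. The paper avoids this by interpolating a different parameter: after reducing a general symmetric channel to a BSC (by revealing the "absolute value" $|Y|_T$ of the outputs, under which the conditional law of $X$ stays uniform and each edge becomes a BSC or a useless unary channel), it moves the BSC correlation $\delta_f$ of a single edge from $0$ to $1$. Now the endpoint $\delta_f=1$ is a \emph{noiseless} edge, which resolves that edge and strictly decreases the induction parameter $|S_\delta|$, and the substantive fact (Claim \ref{claim:termwisegrowthbound}) is that $g_I(t)=g_I(0)+(g_I(1)-g_I(0))\,t^2h(t)$ with $h$ non-decreasing and $h(1)=1$, so $g_I$ lies below the chord in the variable $t^2=\gamma_f$, which is exactly how the percolation probability depends on $\gamma_f$. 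Your proposal contains neither the $|Y|_T$-conditioning reduction to BSCs nor any mechanism playing the role of this chord/monotonicity claim.

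Your fallback route (Fourier/high-temperature expansion plus "union bound over self-avoiding paths") has the inequality pointing the wrong way for the theorem as stated: $\PP(u\sim v\mid\gamma)\le\sum_{\pi\in\Gamma_G(u,v)}\prod_{e\in\pi}\gamma_e$ is the union bound, so a bound of $I_2$ by the path sum is \emph{weaker} than the percolation bound \eqref{main1}; at best it would recover the subadditivity corollary \eqref{main2}, which the paper derives \emph{from} \eqref{main1}, not the other way around. Moreover $\EE[X_uX_v\mid Y]$ is a ratio whose denominator (the partition function) does not disappear when you square and take expectations, so the "sum over connecting subgraphs" identification you invoke is not a clean identity and would need the usual second-moment-method machinery to control. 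Salvageable ideas from your write-up: the observation that $I_2$ composes multiplicatively along a path, and the instinct that symmetry is what kills the cross terms — both are used in the paper, but inside the single-edge BSC interpolation rather than a global expansion.
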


\begin{corollary}\label{cor:symmetricbound}
Let $P_{G,Q}$, $(X,Y)$, and $\gamma$ be as in Theorem \ref{thm:mainsymmetricbound}.

Then for all $u \in V$, $S \subseteq V$, $$I_2(X_u; X_S\mid Y) \leq \PP(u\sim S \mid \gamma).$$
\end{corollary}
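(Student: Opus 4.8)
The plan is to deduce Corollary~\ref{cor:symmetricbound} from Theorem~\ref{thm:mainsymmetricbound} by an auxiliary-vertex gadget that turns the set $S$ into a single extra vertex. Given the symmetric graphical channel $P_{G,Q}$ and a set $S \subseteq V$, I would form the augmented graph $G'$ on vertex set $V \cup \{w\}$, where $w$ is a fresh vertex, by adding for each $s \in S$ an edge $(w,s)$ carrying the noiseless channel whose output is $X_w X_s$. The identity channel is symmetric (take the output transformation $y \mapsto -y$ in Definition~\ref{def:symmetricgraphicalchannel}), so $P_{G',Q'}$ is again a symmetric graphical channel; its synchronization instance is $(X',Y')$ with $X' = (X, X_w)$ uniform on $\{-1,+1\}^{V\cup\{w\}}$ and $Y' = \bigl(Y,\, (X_wX_s)_{s\in S}\bigr)$. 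Applying Theorem~\ref{thm:mainsymmetricbound} to the pair $u, w$ in $G'$ gives $I_2(X_u; X_w \mid Y') \le \PP(u \sim w \mid \gamma')$, where $\gamma'_{(i,j)} = I_2(X_i; X_j \mid Y'_{(i,j)})$.

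It then remains to identify the two sides with the quantities in the corollary. On the percolation side, for $(i,j) \in E(G)$ one has $Y'_{(i,j)} = Y_{(i,j)}$, hence $\gamma'_{(i,j)} = \gamma_{(i,j)}$; and for a gadget edge, Proposition~\ref{prop:chi2binarysynchexpectationexpression} applied to the single-edge noiseless instance gives $\gamma'_{(w,s)} = \EE[\EE[X_wX_s \mid X_wX_s]^2] = 1$. Thus in the bond percolation on $G'$ with openness $\gamma'$ every edge incident to $w$ is open almost surely, so $w$ lies in the open cluster of each $s \in S$; consequently $\{u \sim w\}$ holds if and only if $u$ is joined to some vertex of $S$ by an open path using only edges of $G$, i.e.\ $\PP(u \sim w \mid \gamma') = \PP(u \sim S \mid \gamma)$. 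On the information side, I would first note that the augmented instance is still a binary synchronization instance, so $X_u$ is independent of $Y'$ (the value of a single spin is a free gauge choice given the edge products, exactly as for $Y$), and Proposition~\ref{prop:chi2informationequivalences} --- whose proof only uses this independence --- yields both $I_2(X_u; X_w \mid Y') = I_2(X_u; (X_w, Y'))$ and $I_2(X_u; X_S \mid Y) = I_2(X_u; (X_S, Y))$. Since $(X_S, Y)$ is a deterministic function of $(X_w, Y')$ --- one recovers $X_s = X_w \cdot (X_wX_s)$ for each $s \in S$, and $Y$ is a subtuple of $Y'$ --- the data-processing inequality for the $\chi^2$-divergence gives $I_2(X_u; (X_w, Y')) \ge I_2(X_u; (X_S, Y))$. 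Chaining these relations with the conclusion of Theorem~\ref{thm:mainsymmetricbound} yields $I_2(X_u; X_S \mid Y) \le I_2(X_u; X_w \mid Y') \le \PP(u \sim w \mid \gamma') = \PP(u \sim S \mid \gamma)$, which is the claim.

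The step that requires the most care is the collapse of the $G'$-percolation event $\{u \sim w\}$ to the $G$-percolation event $\{u \sim S\}$, together with the verification that adjoining $w$ does not disturb the marginal independence $X_u \perp Y'$ so that Proposition~\ref{prop:chi2informationequivalences} still applies in the augmented model; neither is genuinely difficult, but both are where the bookkeeping lives. (If one prefers to avoid the data-processing step, one can instead establish the exact identity $I_2(X_u; X_w \mid Y') = I_2(X_u; X_S \mid Y)$ by computing both sides via the expectation formula of Proposition~\ref{prop:chi2binarysynchexpectationexpression} and its set-valued analogue $I_2(X_u; X_S \mid Y) = \EE[\,\EE[X_u \mid X_S, Y]^2\,]$, using the gauge symmetry to see that the fresh spin $X_w$ cancels.)
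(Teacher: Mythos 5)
Your proposal is correct and matches the paper's own proof: the paper likewise adjoins a virtual vertex $w$ joined to each $s\in S$ by a noiseless (identity) channel, applies Theorem~\ref{thm:mainsymmetricbound} to the pair $u,w$ in the augmented graph, and identifies $I_2(X'_u;X'_w\mid Y')$ with $I_2(X_u;X_S\mid Y)$ and $\PP(u\sim w\mid\gamma')$ with $\PP(u\sim S\mid\gamma)$. Your data-processing justification of the information-side identification is a slightly more explicit version of the equality the paper simply asserts, but the argument is the same.
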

We refer to Section \ref{sec:futuredirections} for discussions on how this result may be extended to more general graphical channels, in particular for more general edge channels. 

\section{Applications}\label{sec:applications}

Many common edge channels enjoy the symmetry property of Definition \ref{def:symmetricgraphicalchannel}. We discuss here some important examples. 

\paragraph{Binary Symmetric Channel} One example is the binary symmetric channel with flip probability $\varepsilon$ ($\BSC_{\varepsilon}$, for short). This channel has input and output alphabet $\{-1,+1\}$, and is given by \begin{equation*}\label{eq:defbsc}\BSC_{\varepsilon}(y \mid x) = \begin{cases} 1-\varepsilon, & x = y \\ \varepsilon, & x \neq y \end{cases}.\end{equation*} This channel is symmetric in the sense of Definition \ref{def:symmetricgraphicalchannel}, because the transformation $T(y) = -y$ satisfies both $T^2 = 1$ and $\BSC_{\varepsilon}(T(y) \mid x) = \BSC_{\varepsilon}(y \mid -x)$.

\paragraph{Additive White Gaussian Noise Channel} Another example is the Gaussian noise channel $\AWGN_{\lambda}$, whose output distribution $\AWGN_{\lambda}(\cdot \mid x)$ is the distribution of the random variable $$Y_x = \sqrt{\lambda} x + Z,$$ where $Z \sim \cN(0,1)$ is independent Gaussian noise with mean $0$ and variance $1$. This channel is also symmetric in the sense of Definition \ref{def:symmetricgraphicalchannel}, because the transformation $T(y) = -y$ satisfies $T^2 = 1$ and $\AWGN_{\lambda}(T(\cdot) \mid x) = \AWGN_{\lambda}(\cdot \mid x)$, since $-Y_x = -\sqrt{\lambda}x - Z \sim \sqrt{\lambda} (-x) + Z = Y_{-x}$, because $Z \sim -Z$.

Table \ref{tab:lowerboundstable} gives examples of information-theoretic thresholds that can be obtained as a direct consequence of Theorem \ref{thm:mainsymmetricbound}. In all of these cases, our bounds either matches or improves the previously-known bounds.\footnote{Note that \cite{our_grid} does not attempt to obtain the tightest bound, but rather the existence of a positive lower-bound on the threshold.} The table also gives a few new results.

\renewcommand{\arraystretch}{2}
\begin{table}
\small
    \centering
    \begin{tabular}{|c|c|c|c|c|}
        \hline
        \textbf{Graph\textbackslash Edge Channel}
        & \multicolumn{2}{c|}{$\BSC_{\varepsilon}$} & \multicolumn{2}{c|}{$\AWGN_{\lambda}$} \\
        \hline
        & Known Bound & Our Bound & Known Bound & Our Bound\footnotemark \\
        \hline
         Tree $T$ & \makecell{$(1-2\varepsilon)^2 \leq p_c(T)$ \\ Broadcasting on Trees \\ \cite{evans}} & \makecell{$(1-2\varepsilon)^2 \leq p_c(T)$ \\ Section \ref{subsec:broadcastingontrees1}} & & $f(\lambda) \leq p_c(T)$ \\
         \hline
         Erd\H{o}s-R\'enyi$(n,c/n)$ & \makecell{$(1-2\varepsilon)^2 \leq 1/c$ \\ Censored Block Model \\ \cite{mossel-sbm,lelarge2015reconstruction} } & \makecell{$(1-2\varepsilon)^2 \leq 1/c$ \\ Section \ref{subsec:CBMandclustering}} & & $f(\lambda) \leq 1/c$ \\
         \hline
         Grid $\LL^2$ & \makecell{$(1-2\varepsilon)^2 \leq 1/4$ \\ Grid Synchronization \\ \cite{our_grid}} & \makecell{$(1-2\varepsilon)^2 \leq 1/2$ \\ Section \ref{subsec:gridapplication}} & & $f(\lambda) \leq 1/2$ \\ 
         \hline
         Complete $K_n$ &  & $(1-2\varepsilon)^2 < 1/n$ & \makecell{$\lambda \leq c/n$ for $c < 1$ \\ Spiked Wigner \\ \cite{yash_sbm}} & \makecell{$\lambda \leq c/n$ for $c < 1$ \\ Section \ref{subsec:spikedwignerapplication}} \\
         \hline
    \end{tabular}
    \caption{Regimes in which weak recovery/reconstruction is impossible}
    \label{tab:lowerboundstable}
\end{table}
\footnotetext{Where $f(\lambda) = I_2(X_1; X_2 \mid Y^{(\lambda)})$ for $Y^{(\lambda)} = \sqrt{\lambda}X_1X_2 + Z$, and $X_1,X_2 \stackrel{i.i.d.}{\sim} \Rad(1/2)$, $Z \sim \cN(0,1)$. As calculated in \cite{abbetoc}, $f(\lambda) = \EE[\tanh(\lambda + \sqrt{\lambda}Z)^2]$.}

\subsection{Broadcasting on Trees}\label{subsec:broadcastingontrees1}
In the ``broadcasting on trees'' problem, each vertex $v \in V(T)$ of an infinite tree $T$ has a binary hidden label $\sigma_v$. The hidden labels are assigned by letting the root $\rho$ have spin $\sigma_{\rho} \sim \Rad(1/2)$, and by defining edge labels $\{\eta_e\} \stackrel{i.i.d.}{\sim} \Rad(\varepsilon)$, and letting $$\sigma_{v} = \sigma_{\rho} \prod_{e} \eta_e,$$ where the product is over the edges in the path from $\rho$ to $v$.

In \cite{evans}, it is proved that for $(1-2\varepsilon)^2 < p_c(T)$, $$I_{\mathrm{KL}}(\sigma_{\rho}; (\sigma_{v})_{\{v \ :\  d(\rho,v) = t\}}) \to 0,\quad \text{as }t \to\infty,$$ where $d(v,w)$ denotes for the distance between two vertices $v$ and $w$ in $T$ and $p_c(T)$ denotes the bond percolation threshold of $T$. In other words, for $\varepsilon$ too close to $\frac{1}{2}$, the information given by the depth-$n$ vertex labels about the root goes to 0, and hence reconstruction of the root label from the leaf labels becomes impossible. In fact, \cite{evans} showed this bound on the mutual information is tight: reconstruction is possible for $(1-2\varepsilon)^2 > p_c(T)$, which already known from \cite{ks1} in some cases. But we will only concern ourselves with the impossibility result of the paper.

\begin{example}\label{example:broadcastingontreessymmetric}
We rederive the impossibility result of \cite{evans} by applying Corollary \ref{cor:symmetricbound}.
\end{example}
\begin{proof}

The proof follows by constructing a group synchronization problem that is equivalent to the broadcasting problem.

Let $\{X_v\}_{v \in V(T) \sm \rho} \stackrel{i.i.d}{\sim} \Rad(1/2)$. Let $X_{\rho} = \sigma_{\rho}$. For each $e = (i,j) \in E(T)$ define $$Y_{ij} = X_i \cdot X_j \prod \eta_e.$$ Then $(X,Y)$ is a binary synchronization instance drawn from $P_{T,Q}$, where $Q_e$ is $\BSC_{\varepsilon}$ for each edge $e \in E(T)$. Notice that \begin{align}
    I_{\mathrm{KL}}(\sigma_{\rho}; (\sigma_v)_{\{v \ :\  d(\rho,v) = t\}}) &\leq I_2(\sigma_{\rho}; (\sigma_v)_{\{v \ :\  d(\rho,v) = t\}}) \label{eq:refchi2vskllemma} \\ &\leq I_2(X_{\rho}; (X_v)_{\{v \ :\  d(\rho,v) = t\}}, Y) \label{eq:botdataprocessing} \\ \label{ineq:invokecorollarybondpercbroadcasting} &\leq \PP[\mbox{there is length-}t \mbox{ path from } \rho \mbox{ in $(1-2\varepsilon)^2$-prob. bond perc. on } T],
\end{align}
where \eqref{eq:refchi2vskllemma} follows by Lemma \ref{lem:evanschi2vsklonradhalf}, \eqref{eq:botdataprocessing} follows by the data-processing inequality, and \eqref{ineq:invokecorollarybondpercbroadcasting} follows by Theorem \ref{cor:symmetricbound}. The bond percolation has open probability $(1-2\varepsilon)^2$. For $(1-2\varepsilon)^2 \leq p_c(T)$, the probability of a length-$t$ path vanishes as $t \to \infty$, proving the theorem.
\end{proof}

\subsection{Clustering in the Censored Block Model}\label{subsec:CBMandclustering}
Another application arises in the domain of graph clustering and community detection. Our bound applies to the Censored Block Model (CBM). 
This model is defined in \cite{abbs} for general graphs $G$ when  
the edge channel consists of BSCs, i.e., $$Y_{ij} = X_i\cdot X_j \cdot Z_{ij},$$ for each $(i,j) \in E(G)$, where $Z_{ij} \sim \Rad(\varepsilon)$ is independent noise. In the language of our paper, $(X,Y)$ is a binary synchronization instance on $G$, and all the edge channels are $\BSC_{\varepsilon}$.

\begin{example}\label{example:cbm}
Suppose $G$ is distributed as an Erd\"os-R\'enyi random graph $G(n,\frac{c}{n})$. Weak recovery is impossible in a censored block model on $G$ with flip probability $\varepsilon$ if $$c \leq 1/(1-2\varepsilon)^2.$$
\end{example}
\begin{proof}

For all $u,v \in V(G)$, by Theorem \ref{thm:mainsymmetricbound},
$$I_2(X_u; X_v\mid Y) \leq \PP(u\sim v \mid \{(1-2\varepsilon)^2\}_{e \in E(G)}) = \PP(u\sim v \mid (c(1-2\varepsilon)^2/n)_{e \in K_n}) \to 0$$ if $c \leq 1/(1-2\varepsilon)^2$, since the largest component of $G(n,c/n)$ is of size $O(n^{2/3}) = o(n)$ in this regime (by \cite{ER-seminal2}).
\end{proof}
This rederives a threshold conjectured in \cite{heimlicherlelargemassoulie} and proved in \cite{lelarge2015reconstruction}. 
The proof is analog to the proof of \cite{mossel-sbm} that establishes non-reconstruction for the two-community symmetric Stochastic Block Model $SBM(n,a/n,b/n)$ when $(a-b)^2 \le 2(a+b)$.
While \cite{lelarge2015reconstruction} does not establish the impossibility of reconstruction at the critical threshold, it is straightforward to extend the argument at the threshold. Note also that this gives a tight threshold, i.e., it is proved that reconstruction (a.k.a.\ weak recovery) is possible above this threshold \cite{new-vu,florent_CBM}.

\subsection{Grid Synchronization}\label{subsec:gridapplication}

The proof of \cite{lelarge2015reconstruction} which implies impossibility of reconstruction in the censored block model on the Erd\"os-R\'enyi random graphs $G(n,c/n)$ relies crucially on the fact that for constant $c$, most small neighborhoods of vertices in $G(n,c/n)$ are trees. 

However, the method of coupling with trees would no longer apply if we were to work with the Censored Block Model on a grid, because grids have many small cycles. In this case, our bound still goes through, and is in fact stronger than the previously-known bound of \cite{our_grid} for binary synchronization. Supposing the edge channels were binary symmetric channels with flip probability $\varepsilon$, The previous bound required $(1-2\varepsilon)^2 \leq \frac{1}{4}$ for impossibility of synchronization, while ours only requires $(1-2\varepsilon)^2 \leq \frac{1}{2}$:

\begin{example}\label{example:gridsynchronization}
Let $\LL^2$ be the two-dimensional lattice with vertices $V(\LL^2) = \ZZ^2$ and edges given by the Hamming distance. Let $v_1,\ldots,v_k,\ldots$ be a sequence of vertices such that $v_k$ is at distance $k$ from $0$. Let $(X,Y)$ be a binary synchronization instance drawn from $P_{\LL^2,Q}$, where all the edge channels are $\BSC_{\varepsilon}$. Then, if $(1-2\varepsilon)^2 \leq \frac{1}{2},$ we have $I_2(X_0; X_{v_k}\mid Y) \to 0$ as $k \to \infty$.
\end{example}
\begin{proof}
By Theorem \ref{thm:mainsymmetricbound}

\begin{align}
    I_2(X_0; X_{v_k}\mid Y) &\leq \PP(0 \sim v_k \mid ((1-2\varepsilon)^2)_{e \in E(\LL^2)}) \label{eq:applytheoremoninfinitegraphgridapplication}
    \\ &\to 0 \mbox{ as } k \to \infty \label{eq:citegrimmettforthisone},
\end{align}


Line \eqref{eq:citegrimmettforthisone} follows because $(1-2\varepsilon)^2 \leq 1/2$, which is the critical bond percolation constant of $\LL^2$. And it is known that the probability that there is an open length-$k$ path containing the origin in a critical or sub-critical bond percolation on $\LL^2$ vanishes as $k \to \infty$. A reference for this is \cite{Grimmett}.

Notice that in \eqref{eq:applytheoremoninfinitegraphgridapplication} we have applied Theorem \ref{thm:mainsymmetricbound} in the case of an infinite graph, although we have technically proved the theorem only for finite graphs. We may do this by the monotone convergence of the information and of the connection probability in the percolation.
\end{proof}

\subsection{Spiked Gaussian Wigner Model}\label{subsec:spikedwignerapplication}
In the spiked Wigner model with $\Rad(1/2)$ priors, 
we are given an $n \times n$ matrix $$Y_{\lambda} = \sqrt{\frac{\lambda}{n}} X X^T + W,$$ where $X$ is uniform in $\{-1,+1\}^n$, and $W$ is an independent Gaussian Wigner matrix (real, symmetric, the entries are distributed as unit Gaussians $\cN(0,1)$ and are all independent except for the symmetry constraint).

The spiked Wigner model, and spiked matrix models in general, have been studied in various contexts: for example, in order to evaluate statistical methods such as PCA that estimate low-rank information from noisy data, or as variants of the stochastic block model (\cite{javanmard2016phase}, \cite{wein1}, \cite{alaoui2018estimation}). For $Y_{\lambda}$ as above, \cite{yash_sbm} proved that there is a phase transition in the problem of weak recovery at exactly the critical threshold $\lambda_c = 1$. The impossibility part of this phase transition was later rederived in a more general setting by \cite{wein1}.

The impossibility of recovery for $\lambda < 1$ is a direct consequence of Theorem \ref{thm:mainsymmetricbound}:

\begin{example}\label{example:spikedwigner} Let $Y_{\lambda}$ be defined as above. Then, for $\lambda < 1$, $I_2(X_u; X_v \mid Y_{\lambda}) \to 0$ for all $u\neq v,$ and hence it is impossible to weakly recover $X$ from $Y_{\lambda}$.
\end{example}
\begin{proof}
$(X,Y_{\lambda})$ is distributed as a binary synchronization instance drawn from a graphical channel on $K_n$, in which each edge channel $Q_{(i,j)}$ is given by $$Y_{\lambda,ij} = \sqrt{\frac{\lambda}{n}}X_i\cdot X_j + Z_{ij},$$ where $Z \stackrel{i.i.d}{\sim} \cN(0,1).$
Notice that the edge channels are symmetric (with the transformation $y \mapsto -y$).
Analogously to the case of the censored block model on $G(n,c/n)$, it suffices to show that $$I_2(X_i; X_j \mid Y_{\lambda,ij}  ) = \frac{\lambda}{n} + o(1/n).$$

This is done by explicit calculation. Writing $a = \sqrt{\lambda/n}$,
\begin{align}
I_2(X_i; X_j \mid Y_{\lambda, ij}) &= \EE[\EE[X_i \cdot X_j \mid Y_{\lambda, ij}]^2] \\ &= \int_{-\infty}^{+\infty} \frac{e^{-(x-a)^2/2} + e^{-(x+a)^2/2}}{2 \sqrt{2\pi}} \left(\frac{e^{-(x-a)^2/2} - e^{-(x+a)^2/2}}{e^{-(x-a)^2/2} + e^{-(x+a)^2/2}}\right)^2 dx
\\ &\leq \int_{-\infty}^{+\infty} \frac{e^{-(x-a)^2/2} + e^{-(x+a)^2/2}}{2\sqrt{2\pi}} \left(ax\right)^2 dx \\ \label{eq:gaussianintegralforspikedwigner} &=  a^2(a^2 + 1) \\ &= \frac{\lambda}{n} + o(1/n).
\end{align}
Line \eqref{eq:gaussianintegralforspikedwigner} is a standard Gaussian integral.
\end{proof}

\section{Additional results and future directions}\label{sec:futuredirections}
As mentioned in the introduction, fixing the edge observations and applying the Ising model correlation decay conditions yields an impossibility result for reconstruction. However, the bounds that we achieve with this method are not as strong as those we proved in this paper, because the techniques in our paper allow us to deal with the average-case edge observations, while fixing the edge observations and applying the Dobrushin conditions requires us to work with the worst-case edge observations. It would nonetheless be interesting to elaborate on this connection. 

Various natural extensions can be considered for the main result of this paper. The first one concerns more general edge channels, such as non-binary input alphabets and non-symmetrical channels. We provide below a more general condition on the edge channel that would suffice for the current proof technique to work, without giving explicit examples. In the theorem below, the vertex labels are uniformly random members of some finite group $\cG$ (not necessarily $\{+1,-1\}$), and the edge labels, $Y_{(i,j)}$, are noisy observations of the differences of the endpoints, $X_i \cdot X_j^{-1}$. The proof of Theorem \ref{thm:mainabstract} is analogous to the proof of Theorem \ref{thm:mainsymmetricbound}.

\begin{theorem}\label{thm:mainabstract} Let $G = (V, E)$ be a finite and (for simplicity) deterministic graph with vertex set $V$ and edge set $E$. For every $\gamma \in [0,1]$, let $Q^{\gamma}$ be a collection of edge channels for $G$, with input alphabet $\cG$. For any $\Gamma \in [0,1]^{E}$, let $Q^{\Gamma}$ be the collection of edge channels $(Q_e^{\Gamma(e)})_{e \in E}$, and let $(X^{\Gamma}, Y^{\Gamma})$ be a group-$\cG$ synchronization instance drawn from $P_{G,Q^{\Gamma}}$.

\begin{enumerate}
\item Suppose that $$I_2(X_e^0; Y_e^0) = 0$$ for all $e \in E$.

\item Suppose also that for every $e \in E$, $u,v \in V$, $\Gamma \in [0,1]^{E},$ $g_I(\gamma)$ is continuous for all $\gamma \in [0,1]$ and $$\frac{\partial}{\partial \gamma} \frac{g_I(\gamma) - g_I(0)}{\gamma} \geq 0,$$ for all $\gamma \in (0,1)$, where $$g_I(\gamma) \equiv I_2(X^{\Gamma_{e,\gamma}}_u; X^{\Gamma_{e,\gamma}}_v \mid Y^{\Gamma_{e,\gamma}}),$$ and $\Gamma_{e,\gamma}$ denotes the function in $[0,1]^{E}$ such that $\Gamma_{e,\gamma}(e) = \gamma$ and $\Gamma_{e,\gamma}(f) = \Gamma(f)$ for all $f \neq e$.
\end{enumerate}

Then, for any $u,v \in V$, $$I_2(X^{\Gamma}_u; X^{\Gamma}_v\mid Y^{\Gamma}) \leq (|\cG|-1) \cdot \PP(u \sim v \mid \Gamma).$$
\end{theorem}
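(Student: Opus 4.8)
\textbf{Proof proposal for Theorem \ref{thm:mainabstract}.}
The plan is to run the same edge-by-edge interpolation as in the proof of Theorem \ref{thm:mainsymmetricbound}, now carrying the factor $|\cG|-1$. Fix $u,v\in V$ and, for $\Gamma\in[0,1]^{E}$, write $F(\Gamma):=I_2(X^{\Gamma}_u;X^{\Gamma}_v\mid Y^{\Gamma})$ and $P(\Gamma):=\PP(u\sim v\mid\Gamma)$, and set $D(\Gamma):=(|\cG|-1)\,P(\Gamma)-F(\Gamma)$; the goal is to show $D\ge 0$ on all of $[0,1]^{E}$. Two facts drive the argument. First, conditioning on whether a fixed edge $e$ is open in the percolation gives the \emph{exact} identity
$$P(\Gamma_{e,\gamma}) = (1-\gamma)\,P(\Gamma_{e,0}) + \gamma\,P(\Gamma_{e,1}),$$
which is linear in $\gamma$. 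Second, hypothesis 2 says that $\gamma\mapsto\bigl(g_I(\gamma)-g_I(0)\bigr)/\gamma$ is nondecreasing on $(0,1)$ with $g_I$ continuous on $[0,1]$; comparing the difference quotient at $\gamma$ with its limit at $1$ yields the chord bound $g_I(\gamma)\le(1-\gamma)g_I(0)+\gamma g_I(1)$, i.e.
$$F(\Gamma_{e,\gamma}) \le (1-\gamma)\,F(\Gamma_{e,0}) + \gamma\,F(\Gamma_{e,1}),$$
since $F(\Gamma_{e,\gamma})=g_I(\gamma)$. Subtracting, for every edge $e$, every $\gamma\in[0,1]$ and every $\Gamma$,
$$D(\Gamma_{e,\gamma}) \ge (1-\gamma)\,D(\Gamma_{e,0}) + \gamma\,D(\Gamma_{e,1}) \ge \min\bigl\{D(\Gamma_{e,0}),\,D(\Gamma_{e,1})\bigr\}.$$

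Next I would reduce to $\{0,1\}$-valued open probabilities. Enumerate $E=\{e_1,\dots,e_m\}$. Starting from an arbitrary $\Gamma$, apply the last display to the coordinate $e_1$ and replace $\Gamma$ by whichever of $\Gamma_{e_1,0}$, $\Gamma_{e_1,1}$ achieves the smaller value of $D$; then do the same with $e_2$, and so on. Each step fixes one more coordinate into $\{0,1\}$ without disturbing the already-fixed ones, and never increases $D$, so after $m$ steps we arrive at some deterministic $\Gamma^{\ast}\in\{0,1\}^{E}$ with $D(\Gamma)\ge D(\Gamma^{\ast})$. It therefore suffices to prove $D(\Gamma^{\ast})\ge 0$ for $\Gamma^{\ast}\in\{0,1\}^{E}$, i.e.\ $F(\Gamma^{\ast})\le(|\cG|-1)\,P(\Gamma^{\ast})$. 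Put $E_1:=(\Gamma^{\ast})^{-1}(1)$; the percolation is now deterministic, so $P(\Gamma^{\ast})=\mathbf{1}[\,u\sim v\text{ in }(V,E_1)\,]$, and two cases remain.

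If $u$ and $v$ lie in the same connected component of $(V,E_1)$, then $P(\Gamma^{\ast})=1$ and it suffices to note the elementary bound $F(\Gamma^{\ast})=I_2(X_u;X_v\mid Y^{\Gamma^{\ast}})\le|\cG|-1$, valid for any $\cG$-valued pair: conditionally on each value of $Y^{\Gamma^{\ast}}$, $\sum_{g,g'}\tfrac{p(g,g')^2}{p(g)p(g')}=\sum_{g'}\tfrac{1}{p(g')}\sum_g p(g,g')\tfrac{p(g,g')}{p(g)}\le\sum_{g'}\tfrac{1}{p(g')}\sum_g p(g,g')=|\cG|$, and averaging over $Y^{\Gamma^{\ast}}$ preserves this. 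If instead $u$ and $v$ lie in distinct components $C\neq C'$ of $(V,E_1)$, then $P(\Gamma^{\ast})=0$ and I must show $F(\Gamma^{\ast})=0$, i.e.\ $X_u\perp X_v$ given $Y^{\Gamma^{\ast}}$. Here hypothesis 1 enters: every edge $e$ with $\Gamma^{\ast}(e)=0$ has $I_2(X^0_e;Y^0_e)=0$, hence $Y_e$ is independent of $X$ and may be dropped from the conditioning; and since each $Y_e$ with $e=(i,j)\in E_1$ depends on $X$ only through $X_iX_j^{-1}$, the blocks $\bigl((X_w)_{w\in C},\,(Y_e)_{e\in E_1,\,e\subseteq C}\bigr)$ indexed by the components $C$ of $(V,E_1)$ are mutually independent (the vertex labels are i.i.d.\ and the edge noises are independent). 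Hence $X_u$ and $X_v$, lying in different blocks, are conditionally independent given $Y^{\Gamma^{\ast}}$, so $F(\Gamma^{\ast})=0$. This establishes $D\ge 0$ everywhere, which is the asserted inequality.

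The genuinely hard content has been packaged into hypothesis 2: in the concrete settings (e.g.\ Theorem \ref{thm:mainsymmetricbound}) the star-shapedness of $g_I$ in the single-edge interpolation parameter is the substantive analytic input, established there by an explicit derivative computation for the $\chi^2$-information, whereas in the present abstract form it is a short rearrangement producing the chord bound. Within the proof above the only points requiring care are procedural: the ``take the smaller $D$'' reduction must be carried out one coordinate at a time so that it provably terminates at a configuration in $\{0,1\}^{E}$; and in the disconnected case the decomposition into independent blocks must be along the connected components of $(V,E_1)$ precisely because the observations are noisy functions of the edge-difference variables $X_iX_j^{-1}$ rather than of the incident vertex labels individually.
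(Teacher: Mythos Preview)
Your proposal is correct and follows essentially the same route the paper indicates (``analogous to the proof of Theorem~\ref{thm:mainsymmetricbound}''): the edge-by-edge interpolation of Theorem~\ref{thm:mainbinarysymmetricbound}, with hypothesis~2 playing exactly the role of Claim~\ref{claim:termwisegrowthbound} (star-shapedness of $g_I$ yielding the chord bound $g_I(\gamma)\le(1-\gamma)g_I(0)+\gamma g_I(1)$), the linear identity for $g_P$, and the base case split according to whether $u$ and $v$ are connected in $(V,E_1)$. Your iterative coordinate-by-coordinate reduction to $\{0,1\}^{E}$ is just a repackaging of the paper's induction on $|S_\delta|$, and your general bound $I_2\le|\cG|-1$ in the connected base case is the needed extension of the paper's $I_2=1$ computation for $|\cG|=2$.
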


Another possible extension is concerned with hypergraphs, i.e., interactions of more than two vertex variables per edge.

Finally, while presenting this paper at the Workshop on Combinatorial Statistics, Montreal, May 2018, Y.~Polyanskiy and Y.~Wu informed us of their recent work to appear that obtains results for bounding the classical mutual information for reconstruction problems on graph with bond percolation estimates of properly rescaled open-probability \cite{yury_percolation}. While the results are similar in appearance, parts of the proof techniques appear to be different. In particular, Polyanskiy-Wu  capitalize on their prior general results for strong data-processing inequalities for channels and Bayesian networks \cite{polyanskiy2017strong}. 

\section{Proofs of Theorem \ref{thm:mainsymmetricbound} and Corollary \ref{cor:symmetricbound}}

We first prove a version of Theorem \ref{thm:mainsymmetricbound} for the special case in which all of the edge channels are binary symmetric. We will then extend this specific result to general symmetric channels.

\begin{theorem}\label{thm:mainbinarysymmetricbound}
Let $P_{G,Q}$ be a graphical channel, where $G = (V, E(G))$ is a random graph with vertex set $V = [n]$, and each edge channel $Q_e$ is a binary symmetric channel. Let $(X,Y)$ be a binary synchronization instance drawn from $P_{G,Q}$. Then for all $u,v \in V$, $$I_2(X_u; X_v\mid Y) \leq \PP(u\sim v \mid (I_2(X_i; X_j \mid Y_{(i,j)}))_{(i,j) \in E(G)}).$$

\end{theorem}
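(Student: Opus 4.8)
The plan is to prove Theorem~\ref{thm:mainbinarysymmetricbound} by induction on the number of edges of $G$, using a delete/contract recursion that mirrors the identity
\begin{equation*}
\PP(u\sim v\mid\gamma)=(1-\gamma_e)\,\PP_{G\setminus e}(u\sim v\mid\gamma)+\gamma_e\,\PP_{G/e}(u\sim v\mid\gamma),
\end{equation*}
obtained by conditioning a bond percolation on whether a fixed edge $e$ is open. To make this recursion self-contained I would first state the claim for multigraphs all of whose edge channels are binary symmetric, since contracting an edge can create parallel edges; conditioning on the realized graph reduces the random-graph case to the deterministic one, so I assume $G$ is a fixed multigraph. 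The base case (no edges) is immediate, both sides equalling $\mathbf{1}\{u=v\}$. I would also record the elementary fact that a single BSC edge $e=(i,j)$ with flip probability $\varepsilon$ has $I_2(X_i;X_j\mid Y_{(i,j)})=\delta_e^2$ with $\delta_e:=1-2\varepsilon$, so the percolation parameter on $e$ is $\gamma_e=\delta_e^2$.

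For the inductive step, fix an edge $e=(a,b)$, let $Y_{-e}$ be the observations on all other edges, write $\mu$ for the posterior of $X$ given $Y_{-e}$, and set $p=\EE_\mu[X_uX_v]$, $m=\EE_\mu[X_aX_b]$, $q=\EE_\mu[X_uX_vX_aX_b]$, all functions of $Y_{-e}$. Since the posterior given $(Y_{-e},Y_e)$ is proportional to $\mu(x)$ times the BSC likelihood $\tfrac12(1+\delta_e\,Y_e\,x_ax_b)$, a short computation using Proposition~\ref{prop:chi2binarysynchexpectationexpression} gives, with $\gamma=\gamma_e=\delta_e^2$,
\begin{equation*}
g(\gamma):=I_2(X_u;X_v\mid Y)=\EE_{Y_{-e}}\!\left[\frac{p^2+\gamma\,(q^2-2mpq)}{1-\gamma m^2}\right].
\end{equation*}

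The analytic heart of the argument is that, for each fixed $Y_{-e}$, the integrand $\gamma\mapsto(A+B\gamma)/(1-C\gamma)$ with $A=p^2\ge0$, $B=q^2-2mpq$, $C=m^2\in[0,1]$ is convex on $[0,1]$: its second derivative is $2C(B+CA)/(1-C\gamma)^3$ and $B+CA=(q-mp)^2\ge0$ (when $m^2=1$ one has $q=mp$, so the integrand is the constant $p^2$ and convexity is trivial). Hence $g$ is convex in $\gamma$, giving $g(\gamma_e)\le(1-\gamma_e)\,g(0)+\gamma_e\,g(1)$. At $\gamma_e=0$ the channel on $e$ is pure noise, so $g(0)=\EE_{Y_{-e}}[p^2]=I_2(X_u;X_v\mid Y^{G\setminus e})\le\PP_{G\setminus e}(u\sim v\mid\gamma)$ by the inductive hypothesis. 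At $\gamma_e=1$ the observation $Y_e$ equals $X_aX_b$ exactly; conditioning on $Y_e=s$, substituting $X_b=sX_a$, and twisting each observation $Y_{bc}\mapsto sY_{bc}$ on edges incident to $b$ (allowed because the BSC is symmetric and $s$ is known) turns the conditioned instance into a binary synchronization instance on the contracted multigraph $G/e$ with unchanged BSC parameters, whence $g(1)=I_2(X_{u'};X_{v'}\mid Y^{G/e})\le\PP_{G/e}(u'\sim v'\mid\gamma)$, where $u',v'$ are the images of $u,v$. Plugging these two bounds into the convexity inequality and using the delete/contract identity for $\PP(u\sim v\mid\gamma)$ closes the induction.

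I expect the convexity computation to be routine once the formula for $g(\gamma)$ is in hand; the main obstacle is the $\gamma_e=1$ step --- checking carefully that contraction-plus-twisting really produces a binary synchronization instance on $G/e$ (handling parallel edges, and the cases $u\in\{a,b\}$ or $v\in\{a,b\}$, where the sign $s$ appears linearly but squares away), and that the connectivity event on $G/e$ with the induced parameters is exactly the $e$-open part of $\PP(u\sim v\mid\gamma)$. A secondary, more mechanical point is justifying the reduction from random (and, in later applications, infinite) graphs via conditioning and monotone convergence.
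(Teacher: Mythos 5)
Your proposal is correct, and its analytic core coincides with the paper's: both proofs isolate a single edge $e$, observe that the percolation probability is exactly affine in $\gamma_e=\delta_e^2$, and show that $I_2(X_u;X_v\mid Y)$, viewed as a function of $\gamma_e$ with the other channels frozen, lies below the chord joining its values at $\gamma_e=0$ and $\gamma_e=1$. Your closed form $\EE_{Y_{-e}}\bigl[(p^2+\gamma(q^2-2mpq))/(1-\gamma m^2)\bigr]$ is the same quantity the paper writes as $\sum_\sigma \PP[\bar Y=\sigma]\,r_\sigma(t)$ in the $(a,b,c,d)$ parametrization, and your convexity computation (second derivative proportional to $m^2(q-mp)^2$) is a slightly stronger form of the paper's star-shapedness claim that $(g_I(t)-g_I(0))/t^2$ is non-decreasing; either property suffices. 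Where you genuinely diverge is in the induction scheme. The paper inducts on the number of edges with $\delta_e\notin\{0,1\}$ on a \emph{fixed} graph, so both endpoints $g(0)$ and $g(1)$ are instances with one fewer non-degenerate edge and are handled by the same inductive hypothesis; no contraction is ever performed, and the $\gamma_e\in\{0,1\}$ configurations are dispatched in a separate base case. You instead induct on the number of edges via delete/contract, which buys a trivial base case and the standard percolation recursion, but costs you the contraction lemma (conditioning on $Y_e=s$, substituting $X_b=sX_a$, twisting the observations incident to $b$) together with the obligation to state and prove the theorem for multigraphs, including the bookkeeping for self-loops created by contracting an edge parallel to another. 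All of these steps go through as you describe --- the twist is a $Y$-measurable bijection of the observations, the conditional prior on $(X_w)_{w\neq b}$ remains uniform, and the sign $s$ squares away in $\EE[X_uX_v\mid Y]^2$ --- so the extra work is real but routine. The reduction from random to deterministic graphs is identical to the paper's.
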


\begin{proof}
Suppose we can prove the theorem for the case in which the graph is deterministic. Then, writing $\gamma_{(i,j)} = I_2(X_i; X_j \mid Y_{(i,j)})$, since the graph $G$ is a deterministic function of the edge observations $Y$: $$I_2(X_u; X_v \mid Y) = I_2(X_u; X_v \mid Y, G) \leq \EE_{G}[\PP(u \sim v \mid (\gamma_e)_{e \in E(G)})] = \PP(u \sim v \mid (\gamma_e)_{e \in E(G)}),$$ as desired. Therefore, we may assume that $G$ is deterministic.

For each edge $e \in E(G)$, let the flip probability of $Q_e$ be $\varepsilon_e$, and define $\delta_e = (1-2\varepsilon_e)$. We can assume that $\delta_e \in [0,1]$, because we lose no information by flipping edge labels deterministically. Moreover, by direct calculation $$\gamma_e = \delta_e^2.$$

The proof goes by induction on $|S_{\delta}|$, where $S_{\delta} \equiv \{e \in E(G) : \delta_e \not\in \{0,1\}\} = \{e \in E(G) : \gamma_e \not\in \{0,1\}\}$.

\paragraph{Base case.} $|S_{\delta}| = 0$, so $\gamma_e \in \{0,1\}$ for all $e \in E(G)$. Hence, for all $u,v \in V$, $\PP(u\sim v \mid \gamma) \in \{0,1\}$.
\begin{enumerate}
    \item If $\PP(u\sim v \mid \gamma) = 0$, then let $C_u \subseteq V$ be the open component containing $u$ in the bond percolation on $G$ with open probability $\gamma$. Notice that $C_u$ is deterministic, $v \not\in C_u$, and for all edges $e$ from $C_u$ to $V \sm C_u$, $\delta_e = 0$. Hence $$0 \leq I_2(X_u; X_v \mid Y) \leq I_2((X_w)_{w \in C_u}; (X_w)_{w \in V \sm C_u} \mid Y) = 0.$$
    
    \item If $\PP(u\sim v\mid \gamma) = 1$, then there is a path $P$ from $u$ to $v$ with vertices $u = w_0,w_1,\ldots,w_l = v$ such that $E(P) \subset \{e : \gamma_e = 1\} = \{e : \delta_e = 1\}.$ So $$\prod_{e \in E(P)} Y_e = \prod_{i=0}^{l-1} (X_{w_i} \cdot X_{w_{i+1}}) = X_{w_0} \cdot X_{w_l} = X_u \cdot X_v,$$ which implies that $$I_2(X_u; X_v \mid Y) = I_2(X_u; X_v \mid Y, X_u \cdot X_v) = I_2(X_u; X_u \mid Y, X_u \cdot X_v) = 1,$$ since $X_u$ is $\Rad(1/2)$ and is independent of $Y$.
\end{enumerate}

\paragraph{Inductive step.}
We assume that the theorem for all binary symmetric channels given by $\delta' : E(G) \to [0,1]$ with $|S_{\delta'}| < |S_{\delta}|$.

Pick an arbitrary edge $f \in E(G)$ such that $\delta_f \not\in \{0,1\}$. We will now interpolate between the case in which $\delta_f = 0$, and the case in which $\delta_f = 1$, and the other edge channels are held fixed. For any $t \in [0,1]$, let $\delta^{(t)} : E(G) \to [0,1]$ be given by $$\delta^{(t)} \equiv \begin{cases} \delta_e, & e \neq f \\ t, & e = f \end{cases},$$ let $\gamma^{(t)} = (\delta^{(t)})^2$, let $Q^{(t)}$ be the edge channel corresponding to a binary symmetric channel of parameter associated to $\delta^{(t)}$, and let $(X^{(t)}, Y^{(t)})$ be a binary synchronization instance drawn from $P_{G,Q^{(t)}}.$ Now write $$g_I(t) \equiv I_2(X^{(t)}_u; X^{(t)}_v \mid Y^{(t)})$$ and $$g_P(t) \equiv \PP(u \sim v \mid \gamma^{(t)}).$$
It suffices to prove that $g_I(t) \leq g_P(t)$ for all $t \in [0,1]$.
\\

To this end, for any $t \in [0,1]$, let $B^{(t)}$ be the bond percolation on $G$ with open probability $\gamma^{(t)}$. Couple $B^{(t)}$ with $B^{(0)}$ and $B^{(1)}$ as follows:
$$B^{(t)} = \begin{cases} B^{(0)}, & \mbox{w.p. } 1-t^2 \\ B^{(1)}, & \mbox{w.p. } t^2 \end{cases}$$
In $B^{(0)}$ the probability that $f$ is open is 0, in $B^{(1)}$ the probability that $f$ is open is 1, and in $B^{(t)}$ the probability that $f$ is open is $t^2$. All other edge openness probabilities are independent of $t$. Thus, \begin{equation}\label{eq:connectionprobabilitygrowthbound}g_P(t) = g_P(0) +  (g_P(1)-g_P(0))\cdot t^2.\end{equation}

\begin{claim}\label{claim:termwisegrowthbound}
For any $t \in [0,1]$,
\begin{equation}\label{eq:ggrowthbound}g_I(t) = g_I(0) + (g_I(1) - g_I(0)) \cdot t^2 \cdot h(t),\end{equation} where $h(t)$ is non-decreasing in $[0,1]$.
\end{claim}

We first show that the theorem follows immediately from this claim. Since\footnote{Assuming $g_I(0) \neq g_I(1)$, otherwise the inductive step is trivial.} $h(1) = 1$, and $h(t)$ is non-decreasing for $t \in [0,1]$, we know that $h(t) \leq 1$. This implies that \begin{equation}\label{eq:ggrowthboundexplicit}g_I(t) \leq g_I(0) + (g_I(1) - g_I(0)) \cdot t^2.\end{equation} Then, parametrizing $t$ by $s \in [0,1]$ as $$t(s) = \sqrt{s},$$ by \eqref{eq:connectionprobabilitygrowthbound}, \begin{equation}\label{eq:connectionprobabilitygrowthboundreparametrized} g_P(t(s)) = g_P(0) + (g_P(1) - g_P(0)) \cdot s,\end{equation} and by \eqref{eq:ggrowthboundexplicit}, \begin{equation}\label{eq:ggrowthboundexplicitreparametrized}g_I(t(s)) \leq g_I(0) + (g_I(1) - g_I(0)) \cdot s.\end{equation}

The right-hand sides of equations \eqref{eq:connectionprobabilitygrowthboundreparametrized} and \eqref{eq:ggrowthboundexplicitreparametrized} are equations for line segments for $s \in [0,1]$, and the endpoints of the line segment in \eqref{eq:ggrowthboundexplicitreparametrized} are under the endpoints of the line segments in \eqref{eq:connectionprobabilitygrowthboundreparametrized}, because inductively on $|S_{\delta^{(t)}}|$, we know that \begin{equation*}\label{eq:noisyinduction}
    g_I(0) \leq g_P(0)
\end{equation*} and
\begin{equation*}\label{eq:noiselessinduction}
    g_I(1) \leq g_P(1).
\end{equation*}
Hence, $$g_I(t) \leq g_P(t)$$ for all $t \in [0,1]$.

We now prove the claim.
\begin{proof}[Proof of Claim \ref{claim:termwisegrowthbound}]
Writing $f = (i,j)$, $W^{(t)} = X^{(t)}_u \cdot X^{(t)}_v$, $Z^{(t)} = X^{(t)}_i \cdot X^{(t)}_j$ and $\bar{Y}^{(t)} = (Y^{(t)}_e)_{e \in (E(G) \sm f)}$,

\begin{align*}g_I(t) = \EE[\EE[W^{(t)} \mid \bar{Y}^{(t)}, Y_f^{(t)}]^2] =
\sum_{\sigma \in \{-1,+1\}^{(E(G) \sm f)}} \PP[\bar{Y}^{(t)} = \sigma] \cdot r_{\sigma}(t),\end{align*}
where we take the sum over $\sigma$ such that $\PP[\bar{Y}^{(t)} = \sigma] \neq 0$, and where for each such $\sigma$
\begin{align*}r_{\sigma}(t) \equiv 
 \sum_{\rho \in \{-1,+1\}} \PP[Y_f^{(t)} = \rho \mid \bar{Y}^{(t)} = \sigma ] \left(\sum_{w \in \{-1,+1\}} w  \cdot\PP[W^{(t)} = w \mid \bar{Y}^{(t)} = \sigma, Y_f^{(t)} = \rho]\right)^2.
\end{align*}

Fix $\sigma$ and let \begin{align*}a &= \PP[W^{(t)} = +1, Z^{(t)} = +1 \mid \bar{Y}^{(t)} = \sigma],\\ b &= \PP[W^{(t)} = +1, Z^{(t)} = -1 \mid \bar{Y}^{(t)} = \sigma], \\ c &= \PP[W^{(t)} = -1, Z^{(t)} = +1 \mid \bar{Y}^{(t)} = \sigma], \\ d &= \PP[W^{(t)} = -1, Z^{(t)} = -1 \mid \bar{Y}^{(t)} = \sigma].\end{align*}

We know that \begin{align*}
\PP[W^{(t)} = w, Y_f^{(t)} = \rho \mid \bar{Y}^{(t)} = \sigma] &= \sum_{z \in \{-1,+1\}} \PP[W^{(t)} = w, Y_f^{(t)} = \rho, Z^{(t)} = z \mid \bar{Y}^{(t)} = \sigma] \\ &= \sum_{z \in \{-1,+1\}} \PP[W^{(t)} = w, Z^{(t)} = z \mid \bar{Y}^{(t)} = \sigma]\cdot \PP[ Y_f^{(t)} = \rho \mid Z^{(t)} = z],
\end{align*}
because $Y_f^{(t)}$ is independent of $(W^{(t)},\bar{Y}^{(t)})$ given $Z^{(t)}$.
And also, \begin{equation*}\PP[Y_f^{(t)} = \rho \mid \bar{Y}^{(t)} = \sigma] = \sum_{w \in \{-1,+1\}} \sum_{z \in \{-1,+1\}} \PP[W^{(t)} = w, Z^{(t)} = z \mid \bar{Y}^{(t)} = \sigma] \cdot \PP[Y_f^{(t)} = \rho \mid Z^{(t)} = z].\end{equation*}
Thus,
\begin{align*}r_{\sigma}(t) = \bigg(&  \frac{((a(1-t) + b(1+t)) - (c(1-t)+d(1+t)))^2}{2((a(1-t) + b(1+t)) + (c(1-t)+d(1+t)))} \\ &+\frac{((a(1+t) + b(1-t)) - (c(1+t)+d(1-t)))^2}{2((a(1+t) + b(1-t)) + (c(1+t)+d(1-t)))}\bigg).\end{align*}
In particular,
\begin{equation*}r_{\sigma}(0) = \frac{((a+b)-(c+d))^2}{(a+b)+(c+d)}.\end{equation*}
Hence
$$r_{\sigma}(t) = t^2 \cdot h_{\sigma}(t) + r_{\sigma}(0),$$ where $$h_{\sigma}(t) \equiv \frac{16(ad - bc)^2}{(a+b+c+d)((a+b+c+d)^2-t^2(a-b+c-d)^2)},$$ which is positive and non-decreasing on $[0,1]$, because the numerator and denominator of $h_{\sigma}(t)$ are nonnegative, the numerator is constant, and the denominator is monotonically non-increasing for $t \in [0,1]$. (Since we assume that $(a+b+c+d) > 0$, the case $(a-b+c-d)^2 = (a+b+c+d)^2$ is the only case in which the denominator can be zero, and in this case we can set $h_{\sigma}(t) = 0$, since $Z^{(t)}$ is fully determined by $\bar{Y}^{(t)} = \sigma$ and knowing $Y_f^{(t)}$ can give no new information.)

Since $\PP[\bar{Y}^{(t)} = \sigma] = \PP[\bar{Y}^{(0)} = \sigma]$,
$$g_I(t) - g_I(0) = t^2\cdot \left(\sum_{\sigma \in \{-1,+1\}^{(E(G) \sm f)}}  \PP[\bar{Y}^{(t)} = \sigma] \cdot h_{\sigma}(t)\right),$$ the claim follows by defining $$h(t) = \left(\sum_{\sigma \in \{-1,+1\}^{(E(G) \sm f)}}  \PP[\bar{Y}^{(t)} = \sigma] \cdot h_{\sigma}(t)\right).$$
\end{proof}
The proof of the claim concludes the proof of Theorem \ref{thm:mainbinarysymmetricbound}.
\end{proof}




In order to see the relationship between Theorem \ref{thm:mainbinarysymmetricbound} and Theorem \ref{thm:mainsymmetricbound}, we define the ``absolute value'' of the output of a symmetric edge channel:

\begin{definition}\label{def:absolutevalue}
Given a symmetric edge channel $Q$ with output alphabet $\cY$ and symmetry transformation $T : \cY \to \cY$, we define the absolute value $|\cdot|_T : \cY \to 2^{\cY}$ by $$|y|_T = \{y, T(y)\}.$$
\end{definition}

The definition is motivated by viewing $T$ as a sign-flipping transformation (which it is, in the $\BSC$ and $\AWGN$ cases). Notice that since $T^2 = \mbox{id}$, $|y|_T = |T(y)|_T$ for all $y \in \cY$.

We are now ready to prove Theorem \ref{thm:mainsymmetricbound}.

\begin{proof}[Proof of Theorem \ref{thm:mainsymmetricbound}]
\item 
\paragraph{Proof overview}
We are given a binary synchronization instance $(X,Y)$ drawn from a symmetric graphical channel $P_{G,Q}$ such that each edge channel $Q_e$ has symmetry transformation $T_e$. In the proof that follows, we will (1) show that if we first reveal $|Y|_T$, then we do not give away information about $X$ -- indeed, the posterior distribution of $X$ given $|Y|_T$ will still be uniform over $\{-1,1\}^V$. We will then (2) show that conditioned on $|Y|_T$, $(X,Y)$ is a \textit{binary} symmetric synchronization instance. The theorem will then follow by (3) applying the bound of Theorem \ref{thm:mainbinarysymmetricbound} for each realization $|Y|_T = z$, and by taking the expectation over $|Y|_T$ at the very end.

\paragraph{Notation and Conditional Probability Calculations}
With a slight abuse of notation, for every edge $e$ let $$Q_{e}(\cdot) \equiv \frac{Q_{e \mid +}(\cdot) + Q_{e \mid -}(\cdot)}{2},$$ and define $g_e^+ \equiv \frac{1}{2}\frac{dQ_{e\mid +}}{dQ_{e}}$ and $g_e^- \equiv \frac{1}{2}\frac{dQ_{e\mid -}}{dQ_{e}}$, so that $g_e^+ + g_e^- \equiv 1$.
\\

The motivation for these definitions is that for all $y_e \in \cY_e$,

\begin{equation}\label{eq:XgivenYpos}\PP[X_e = +1 \mid Y_e = y_e] = \left(\frac{d(Q_{e \mid +}/2)}{d((Q_{e \mid +} + Q_{e \mid -})/2)}\right) \left(y_e\right) = \left(\frac{dQ_{e \mid +}}{d(Q_{e \mid +} + Q_{e \mid -})}\right) \left(y_e\right) =  g_e^+(y_e).\end{equation} Similarly, \begin{equation}\label{eq:XgivenYneg}\PP[X_e = -1 \mid Y_e = y_e] = g_e^-(y_e).\end{equation}

Notice also that since the channel $Q_e$ is symmetric with transformation $T_e$,
$$Q_e'(\cdot) \equiv Q_e(T_e(\cdot)) = \frac{Q_{e \mid +}(T_e(\cdot)) + Q_{e \mid -}(T_e(\cdot))}{2} = \frac{Q_{e \mid -}(\cdot) + Q_{e \mid +}(\cdot)}{2} = Q_e(\cdot).$$ This means that
\begin{equation}\label{eq:YgivenXsymmetry}g_e^+(y_e) = g_e^-(T_e(y_e)).\end{equation}
Also, if $y_e \neq T_e(y_e)$,
\begin{align}\label{eq:YgivenZ}\PP[Y_e = y_e \mid |Y_e|_{T_e} = |y_e|_{T_e}] 
&= \left(\frac{dQ_e}{d(Q_e + Q'_e)}\right)(y_e) = \left(\frac{1}{2}\frac{dQ_e}{dQ_e}\right)(y_e) = \frac{1}{2}. \end{align}
and if $y_e = T_e(y_e)$, then
\begin{equation}\label{eq:YgivenZeqtriv}
\PP[Y_e = y_e \mid |Y_e|_{T_e} = |y_e|_{T_e}] = 1.
\end{equation}

\paragraph{(1) $|Y|_T$ is independent of $X$}
For each $e \in E(G)$, $|Y_e|_{T_e}$ is the output of a channel on $X_e$, so it suffices to prove that for all $e$, $$\PP[X_e = +1 \mid |Y_e|_{T_e} = z_e] = \frac{1}{2}.$$

Writing $z_e = \{y_e, T_e(y_e)\}$, if $y_e = T_e(y_e)$, then $$\PP[X_e = +1 \mid |Y_e|_{T_e} = z_e] = \PP[X_e = +1 \mid Y_e = y_e] = g_e^+(y_e)
= \frac{g_e^+(y_e) + g_e^-(T_e(y_e))}{2} = \frac{g_e^+(y_e) + g_e^-(y_e)}{2} = \frac{1}{2}.$$
And if $y_e \neq T_e(y_e)$, then 
\begin{align}\PP[X_e = +1 \mid |Y_e|_{T_e} = z_e] &= \sum_{y'_e \in z_e} \PP[X_e = +1 \mid Y_e = y'_e] \PP[Y_e = y'_e \mid |Y_e|_{T_e} = z_e] \\ \label{eq:criticalstepinZconditioningXindependence} &= g_e^+(y_e) \cdot \frac{1}{2} + g_e^+(T_e(y_e)) \cdot \frac{1}{2} \\ &= \label{eq:conditioningsymmetrystepXgivenZ} \frac{1}{2}\left(g_e^+(y_e) + g_e^-(y_e)\right) \\ \label{eq:XgivenZ} &= \frac{1}{2}.\end{align}
Line \eqref{eq:criticalstepinZconditioningXindependence} follows by plugging in \eqref{eq:XgivenYpos} and  \eqref{eq:YgivenZ}. Line \eqref{eq:conditioningsymmetrystepXgivenZ} follows by \eqref{eq:YgivenXsymmetry}.

\paragraph{(2) Conditioned on $|Y|_T = z$,  $(X,Y)$ is distributed as a binary synchronization instance} drawn from $P_{G,Q^z}$, where $Q^z$ is the sequence of unary and binary symmetric edge channels $Q_e^{z}$ with output alphabet $z_e$, given by

$$Q_e^{z}(y_e \mid x_e) = \delta(\{y_e\} = z_e)$$ if $|z_e| = 1$, and
$$Q_e^{z}(y_e \mid x_e) = \begin{cases}0, & y_e \not\in z_e \\ g_e^+(y_e), & y_e \in z_e, x_e = +1 \\ g_e^-(y_e), & y_e \in z_e, x_e = -1\end{cases}$$ if $|z_e| = 2$.

Notice that if $|z_e| = 1$, then $Q_e^{z}$ is a unary-output channel. And if $|z_e| = 2$, then $Q_e^{z}$ is a binary symmetric channel, because \begin{enumerate}
    \item \textbf{Binary}: The output alphabet is $z_e = \{y_e, T_e(y_e)\}$, and $y_e \neq T_e(y_e)$.
    \item \textbf{Well-defined}: $g_e^+(y_e) + g_e^+(T_e(y_e)) = g_e^+(y_e) + g_e^-(y_e) = 1$, and $g_e^-(y_e) + g_e^-(T_e(y_e)) = g_e^+(T_e(y_e)) + g_e^-(T_e(y_e)) = 1$.
    \item \textbf{Symmetric}: $g_e^+(y_e) = g_e^-(T_e(y_e))$ and $g_e^+(T_e(y_e)) = g_e^-(y_e)$.
\end{enumerate}

Since we have shown that $X$ is independent of $|Y|_T$ in the previous paragraph, we only need to that conditioned on $|Y|_T = z$ and  $X = x$, the random variable $Y$ has distribution $P_{G,Q^z}(\cdot \mid x)$: We write for all $x,y,z$,
\begin{align*}
    \PP[Y = y \mid X = x, |Y|_T = z] = \prod_{e} \PP[Y_e = y_e \mid X_e = x_e, |Y_e|_{T_e} = z_e],
\end{align*} since each edge label $Y_e$ is independent of the other edge labels given $(X,|Y|_T)$, and each edge label $Y_e$ depends only on $(X_e,|Y_e|_{T_e})$ given $(X,Y)$.

So we just need to prove that for each edge $e$, $$\PP[Y_e = y_e \mid X_e = x_e, |Y_e|_{T_e} = z_e] = Q_e^{z}(y_e \mid x_e).$$ This is true, because if $T_e(y_e) = y_e$, then $$\PP[Y_e = y_e \mid X_e = x_e, |Y_e|_{T_e} = z_e] = \delta(y_e \in z_e) = Q_e^{z}(y_e \mid x_e),$$ and if $T_e(y_e) \neq y_e$, then \begin{align}\PP[Y_e = y_e \mid X_e = x_e, |Y_e|_{T_e} = z_e] &= \frac{\PP[X_e = x_e \mid Y_e = y_e] \cdot \PP[Y_e = y_e \mid |Y_e|_{T_e} = z_e]}{\PP[X_e = x_e \mid |Y_e| = y_e]} \\ &= \frac{\PP[X_e = x_e \mid Y_e = y_e] \cdot \PP[Y_e = y_e \mid |Y_e|_{T_e} = z_e]}{\PP[X_e = x_e]} \\ \label{eq:applyYgivenZ} &= \frac{\PP[X_e = x_e \mid Y_e = y_e]}{2\PP[X_e = x_e]} \\ \label{eq:applyXgivenY} &= Q_e^{z_e}(y_e \mid x_e). \end{align}


\paragraph{(3) Application of Theorem \ref{thm:mainbinarysymmetricbound}}

For each $e = (i,j) \in E(G)$, define $$\gamma^z_e = I_2(X_i; X_j \mid Y_e, |Y_e|_{T_e} = z_e),$$ and $$\gamma_e = I_2(X_i; X_j \mid Y_e).$$ Then
\begin{align} \label{eq:mainsymmetricconditiononz} I_2(X_u; X_v \mid Y) &= I_2(X_u; X_v \mid Y, |Y|_T) \\ &= \EE_{|Y|_T}[I_2(X_u; X_v \mid Y)] \\ \label{ineq:applybsccase} &\leq \EE_{|Y|_T}[\PP(u \sim v \mid \gamma^{|Y|_T})] \\ &=\label{eq:expectationofpercolation} \PP(u \sim v \mid \gamma).
\end{align}

Line \eqref{eq:mainsymmetricconditiononz} follows because $|Y|_T$ is a function of $Y$.

Line \eqref{ineq:applybsccase} follows by Theorem \ref{thm:mainbinarysymmetricbound}, because conditioned on $|Y|_T = z$, the pair $(X,Y)$ is a binary synchronization instance drawn from a graphical channel with binary symmetric edge channels. (We can ignore the unary edge channels, since their outputs do not depend on $X$, and if $Q^{z}_e$ is unary, then $\gamma^z_e = 0$, so $e$ does not influence the probability of a connection $u \sim v$ in the percolation.)

Line \eqref{eq:expectationofpercolation} follows because we can let $P$ be a bond percolation on $G$ such that, each edge $e$ is independently open with probability $\gamma^{|Y|_T}_e$. Then the probability that $u$ and $v$ are connected by an open path is $\EE_{|Y|_T}[\PP(u\sim v \mid \gamma^{|Y|_T})]$. We can also calculate this probability in a different way: each edge $e = (i,j)$ in $P$ is {\em independently} open with probability $\EE_{|Y|_T}[\gamma^{|Y|_T}_e] = I_2(X_i; X_j \mid Y_e,|Y_e|_{T_e}) = \gamma_e,$ where the independence occurs because the entries of $|Y|_T$ are all independent, since they are independent of each other given $X$, and $|Y|_T$ is independent of $X$. Hence, there is an open path in $P$ connecting $u$ and $v$ with probability $\PP(u\sim v \mid \gamma)$.
\end{proof}

We can now extend Theorem \ref{thm:mainsymmetricbound} to bound the information that the edge labels $Y$ and a set $X_S$ of vertex label give about another vertex label $X_u$:
\begin{proof}[Proof of Corollary \ref{cor:symmetricbound}]
Create a ``virtual'' vertex $w$ and construct the graph $G'$ with $V(G') = V(G) \cup w$ and $E(G') = E(G) \cup \{(v,w) : v \in S\}$. Let $Q'$ be edge channels such that $Q'_e = Q_e$ for all $e \in E(G)$, and $Q'_{(v,w)}(y \mid x) = \delta(x = y)$ for all $v \in S$. Draw $(X',Y')$ from $P_{G',Q'}$. Since $P_{G',Q'}$ is a symmetric graphical channel, by Theorem \ref{thm:mainsymmetricbound}, $$I_2(X'_u; X'_w \mid Y') \leq \PP(u \sim w \mid (I_2(X'_i; X'_j \mid Y'_{(i,j)}))_{(i,j) \in E(G')}) = \PP(u \sim S \mid (I_2(X_i; X_j \mid Y_{(i,j)}))_{(i,j) \in E(G)}).$$
And $$I_2(X'_u; X'_w \mid Y') = I_2(X'_u; X'_S \mid Y') = I_2(X_u; X_S\mid Y).$$
\end{proof}


\bibliographystyle{alpha}
\bibliography{gen_sbm.bib}

\appendix

\section{$\chi^2$-mutual information}\label{app:chi2mutualinformation}
In this appendix, we define the $\chi^2$-mutual information, $I_2$, and prove Propositions \ref{prop:chi2informationequivalences}

\subsection{$f$-divergences and $f$-mutual informations}
\paragraph{$f$-divergences.}
Given two probability distributions $\mu$ and $\nu$ over a probability space $\Omega$ such that $\mu \ll \nu$ (that is, $\mu$ is absolutely continuous with respect to $\nu$), and given strictly convex $f : \RR \to \RR$ such that $f(1) = 0$, we may define the $f$-divergence $$D_f(\mu || \nu) \equiv \int_{\Omega} f\left(\frac{d\mu}{d\nu}\right) d\nu.$$ Here $\frac{d\mu}{d\nu}$ denotes the Radon-Nikodym derivative. (When $\Omega$ is finite, $\frac{d\mu}{d\nu} (x) = \frac{\mu(x)}{\nu(x)}$ for all $x \in \Omega$.) $f$-divergences were introduced in \cite{csiszar1967information}.
\paragraph{$f$-mutual informations.}
Given variables $A,B$ with joint distribution $\nu_{A,B}$ on $\cA \times \cB$, and marginal distributions $\nu_A$ on $\cA$, $\nu_B$ on $\cB$, the $f$-mutual information between them is given by 
$$I_f(A; B) \equiv D_f(\nu_{A,B} || (\nu_{A} \times \nu_{B})).$$
$I_f$ is nonnegative, and zero if and only if $A$ and $B$ are independent. Thus, we can take it as a measure of the degree of independence of the variables $A$ and $B$: the higher the mutual information, the more ``correlated'' the variables are, and the more information they give about each other.

Moreover, the $f$-mutual information also has following well-known ``data-processing'' property (see \cite{CoverThomas}, for example):
\begin{proposition}\label{prop:dataprocessing}
for $A,B,C$ such that $A$ is independent of $C$ given $B$, \begin{equation}I_f(A; C) \leq I_f(A; B).\end{equation}
In particular, if $C$ is a deterministic function of $B$, then $I_f(A; C) \leq I_f(A; B).$
\end{proposition}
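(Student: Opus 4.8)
The plan is to deduce the statement from the data-processing inequality for $f$-divergences under a common Markov kernel, namely that $D_f(\mu K \,\|\, \nu K) \le D_f(\mu \,\|\, \nu)$ for every Markov kernel $K$ and every pair $\mu \ll \nu$, where $\mu K$ denotes the pushforward $(\mu K)(\cdot) = \int K(\cdot \mid x)\,\mu(dx)$. First I would translate the conditional-independence hypothesis into such a kernel. Since $A$ is independent of $C$ given $B$, there is a regular conditional distribution $K(\cdot \mid b) = P_{C \mid B = b}$ (the output alphabets in this paper are standard Borel, so this exists), and the kernel $\widetilde K$ on $\cA \times \cB$ given by $\widetilde K(\,\cdot \mid (a,b)) = \delta_a \otimes K(\cdot \mid b)$ pushes $P_{A,B}$ forward to $P_{A,C}$. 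The crucial observation is that the \emph{same} kernel also pushes the product measure $P_A \times P_B$ forward to $P_A \times P_C$, because $\widetilde K$ acts only on the second coordinate and $K$ does not depend on $a$. Hence $I_f(A;C) = D_f(P_{A,C} \,\|\, P_A \times P_C) = D_f\big((P_{A,B})\widetilde K \,\big\|\, (P_A \times P_B)\widetilde K\big)$ while $I_f(A;B) = D_f(P_{A,B} \,\|\, P_A \times P_B)$, so the desired inequality reduces to the kernel statement applied to $\mu = P_{A,B}$, $\nu = P_A \times P_B$, $K = \widetilde K$.

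For the kernel statement itself, I would set $\mu' = \mu K$, $\nu' = \nu K$, $r = d\mu/d\nu$, and first check that $\mu' \ll \nu'$ (if $\nu'(B) = 0$ then $\mathbf 1_B(Y) = 0$ a.s.\ under the coupling $\nu(dx)K(dy \mid x)$, hence $\mu'(B) = \int \mathbf 1_B(Y) r(X)\,d(\nu \otimes K) = 0$). Writing $\nu_{x\mid y}$ for the reverse conditional of that coupling, the computation $\mu'(B) = \int K(B \mid x) r(x)\,\nu(dx) = \int_B \big(\int r(x)\,\nu_{x\mid y}(dx)\big)\,\nu'(dy)$ identifies $\tfrac{d\mu'}{d\nu'}(y) = \int r(x)\,\nu_{x\mid y}(dx)$ for $\nu'$-a.e.\ $y$. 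Conditional Jensen applied to the convex function $f$ then gives $f\big(\tfrac{d\mu'}{d\nu'}(y)\big) \le \int f(r(x))\,\nu_{x\mid y}(dx)$, and integrating against $\nu'(dy)$ with the tower property yields
\[
D_f(\mu'\,\|\,\nu') = \int f\Big(\tfrac{d\mu'}{d\nu'}(y)\Big)\,\nu'(dy) \le \int \Big( \int f(r(x))\,\nu_{x\mid y}(dx)\Big)\,\nu'(dy) = \int f(r(x))\,\nu(dx) = D_f(\mu\,\|\,\nu).
\]

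Combining the two steps gives $I_f(A;C) \le I_f(A;B)$; the case where $C = \phi(B)$ is a deterministic function is simply the instance $K(\cdot \mid b) = \delta_{\phi(b)}$. The main obstacle I anticipate is purely measure-theoretic rather than conceptual: justifying the existence of the regular conditional distribution $\nu_{x\mid y}$ and the Radon--Nikodym identity above at full generality, and verifying that absolute continuity is preserved under $K$. On discrete or Polish output spaces — which covers every application in this paper — these are routine, so at this level of generality one can alternatively simply invoke a standard reference such as \cite{CoverThomas} or the data-processing property of $f$-divergences proved via the joint convexity of the perspective map $(u,t) \mapsto t f(u/t)$.
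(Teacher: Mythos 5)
Your proof is correct. Note, however, that the paper does not actually prove this proposition: it treats the data-processing property of $f$-mutual information as standard and simply points to \cite{CoverThomas}. What you have written is a self-contained proof along the classical lines: reduce the mutual-information statement to the data-processing inequality for $f$-divergences under a Markov kernel, and prove the latter by identifying $\tfrac{d(\mu K)}{d(\nu K)}(y)$ as the conditional expectation $\int r(x)\,\nu_{x\mid y}(dx)$ of $r = d\mu/d\nu$ under the coupling $\nu(dx)K(dy\mid x)$, then applying conditional Jensen. The one genuinely non-boilerplate observation in the reduction --- that the kernel $\widetilde K(\cdot\mid(a,b)) = \delta_a \otimes K(\cdot\mid b)$ simultaneously pushes $P_{A,B}$ to $P_{A,C}$ (this is exactly where $A \perp C \mid B$ is used) and $P_A \times P_B$ to $P_A \times P_C$ (because $K$ does not depend on $a$) --- is handled correctly, and your verification that absolute continuity is preserved under $K$ closes the only gap that a careless version of this argument usually leaves open. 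Two minor remarks: Jensen only needs convexity of $f$, not strict convexity (the latter is used in the paper only for the characterization $I_f = 0 \iff$ independence); and the measure-theoretic caveats you flag (existence of the reverse regular conditional $\nu_{x\mid y}$) are indeed harmless here since all alphabets in the paper are either finite or standard Borel. So your argument buys a complete proof where the paper offers only a citation, at the cost of some routine disintegration bookkeeping.
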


\subsection{Definition and basic properties of the $\chi^2$-mutual information}
\begin{definition}The $\chi^2$-mutual information, $I_2$, is the $f$-mutual information, $I_f$, with $f(t) = (t - 1)^2$.
\end{definition}

\begin{proposition}\label{prop:correqualschi2}
Let $A,U$ be jointly-distributed random variables, where $U \sim \Rad(1/2)$. Then $$I_2(A; U) = \EE_A[\EE_U[U \mid A]^2].$$
\end{proposition}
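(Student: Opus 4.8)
The plan is to unwind the definition of $I_2$ directly and identify the integrand with a conditional expectation. Since $U \sim \Rad(1/2)$, the joint law $\nu_{A,U}$ is absolutely continuous with respect to the product $\nu_A \times \nu_U$, and the Radon--Nikodym derivative at a point $(a,u)$ is $\frac{d\nu_{A,U}}{d(\nu_A\times\nu_U)}(a,u) = \frac{\PP[U = u \mid A = a]}{\PP[U=u]} = 2\,\PP[U = u \mid A = a]$, using $\PP[U=u]=1/2$. Plugging into $D_f$ with $f(t)=(t-1)^2$ gives
\begin{align*}
I_2(A;U) &= \int f\!\left(\tfrac{d\nu_{A,U}}{d(\nu_A\times\nu_U)}\right) d(\nu_A\times\nu_U) \\
&= \EE_A\!\left[\tfrac{1}{2}\sum_{u \in \{-1,+1\}} \bigl(2\,\PP[U=u\mid A] - 1\bigr)^2\right].
\end{align*}

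Next I would simplify the inner sum. Writing $p \equiv \PP[U=+1 \mid A]$ so that $\PP[U=-1\mid A] = 1-p$, the inner sum equals $(2p-1)^2 + (1-2p)^2 = 2(2p-1)^2$, so the $\tfrac12$ cancels and $I_2(A;U) = \EE_A[(2p-1)^2]$. On the other hand $\EE_U[U \mid A] = (+1)\cdot p + (-1)\cdot(1-p) = 2p-1$, hence $\EE_U[U\mid A]^2 = (2p-1)^2$. Taking the expectation over $A$ yields $I_2(A;U) = \EE_A[\EE_U[U\mid A]^2]$, as claimed.

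There is no real obstacle here; the only point requiring a modicum of care is the measure-theoretic justification that $\nu_{A,U} \ll \nu_A \times \nu_U$ and that the stated Radon--Nikodym derivative is correct, which follows because $U$ is a discrete variable with strictly positive masses, so conditioning on $A$ is unproblematic and the density factorizes as a ratio of conditional to marginal masses of $U$. Everything else is the elementary algebraic identity $(2p-1)^2+(1-2p)^2 = 2(2p-1)^2$ together with recognizing $2p-1$ as the conditional mean of a $\pm1$ variable.
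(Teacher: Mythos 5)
Your proof is correct and follows essentially the same route as the paper's: both unwind the definition of $D_{\chi^2}$, use $\PP[U=u]=1/2$ to identify the Radon--Nikodym derivative, and recognize $2p-1$ as the conditional mean $\EE_U[U\mid A]$. The only (cosmetic) difference is that the paper carries out the computation via algebra on signed measures, writing $\nu_A\times\nu_U=(\nu_{A,U}+\nu_{A,-U})/2$, whereas you work directly with the conditional probabilities $\PP[U=u\mid A]$; both are valid, and you correctly flag the one measure-theoretic point that needs justification.
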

\begin{proof}
For $U \sim \Rad(1/2)$, $$I_2(A; U) = \EE_A[\EE_U[U \mid A]^2]$$ because, letting $\nu_Z$ denote the distribution of $Z$, and $\Omega$ denote the sample set of $A$,
\begin{align*}
    I_2(A; U) &= \int_{\Omega \times \{-1,+1\}} \left(\frac{d(\nu_{A,U})}{d(\nu_A \times \nu_U)} - 1\right)^2 d(\nu_{A} \times \nu_{U})
    \\ &= \int_{\Omega \times \{-1,+1\}} \left(\frac{d(\nu_{A,U} - \nu_{A} \times \nu_U)}{d(\nu_A \times \nu_U)}\right)^2 d(\nu_A \times \nu_U) \\ &= \int_{\Omega \times \{-1,+1\}} \left(\frac{d(\nu_{A,U} - (\nu_{A,U} + \nu_{A,-U})/2)}{d(\nu_A \times \nu_U)}\right)^2 d(\nu_A \times \nu_U) \\ &= \int_{\Omega \times \{-1,+1\}} \left(\frac{1}{2}\frac{d(\nu_{A,U} - \nu_{A,-U})}{d(\nu_A \times \nu_U)}\right)^2 d(\nu_A \times \nu_U) \\ &= \int_{\Omega} \EE_U[U \mid A]^2 d\nu_A \\ & = 
    \EE_A[\EE_U[U \mid A]^2].
\end{align*}
\end{proof}

\subsection{Proof of Proposition \ref{prop:chi2informationequivalences}}
\begin{proof}[Proof of Proposition \ref{prop:chi2informationequivalences}]
For a binary synchronization instance $(X,Y)$ with underlying graph $G$, and for $u \in V(G)$, $S \subseteq V(G)$, we have $$I_2(X_u; X_S, Y) = I_2(X_u; X_S \mid Y)$$ because \begin{align}\label{eq:applycorrequalschi2firsttimefirstI2equivalence}
    I_2(X_u; X_S, Y) &= \EE_{X_S,Y}[\EE_{X_u}[X_u \mid X_S, Y]^2] \\ &= \EE_{Y}[\EE_{X_S}[\EE_{X_u}[X_u \mid X_S, Y]^2 \mid Y]] \\
    &= \label{eq:applycorrequalschi2secondtimefirstI2equivalence} I_2(X_u; X_S \mid Y),
\end{align}
where Lines \eqref{eq:applycorrequalschi2firsttimefirstI2equivalence} and \eqref{eq:applycorrequalschi2secondtimefirstI2equivalence} follow by Proposition \ref{prop:correqualschi2}, as $X_u$ is $\Rad(1/2)$ and is independent of $Y$.
\end{proof}

\subsection{Proof of Proposition \ref{prop:chi2binarysynchexpectationexpression}}
\begin{proof}[Proof of \ref{prop:chi2binarysynchexpectationexpression}]
By Proposition \ref{prop:chi2informationequivalences} it is equivalent to show that $$I_2(X_u; X_v, Y) = \EE_Y[\EE_X[X_u \cdot X_v \mid Y]^2].$$ By Proposition \ref{prop:correqualschi2} it suffices to show that $$I_2(X_u; X_v, Y) = I_2(X_u \cdot X_v; Y).$$ This is true because $$I_2(X_u; X_v,Y) = I_2(X_u \cdot X_v; X_v,Y) = I_2(X_u \cdot X_v; Y).$$ The first equality holds by data-processing (Proposition \ref{prop:dataprocessing}). The second holds because $X_v$ is independent of $(X_u\cdot X_v,Y)$.
\end{proof}

\subsection{$\chi^2$-mutual information and KL-mutual information}
\begin{definition}
The KL-mutual information, $I_{KL}$, is the $f$-mutual information, $I_f$, with $f(t) = t \lg t$. $\lg t \equiv (\log t)  / (\log 2)$ is the base-2 logarithm.
\end{definition}
\begin{lemma}[$I_2$ vs. $I_{\mathrm{KL}}$]\label{lem:evanschi2vsklonradhalf}
Let $A,U$ be jointly-distributed random variables, where $U \sim \Rad(1/2)$. Then $$\frac{1}{2}I_2(A; U) \leq I_{\mathrm{KL}}(A; U) \leq I_2(A; U).$$
\end{lemma}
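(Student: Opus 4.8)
The plan is to reduce the claim to a purely one-dimensional inequality between two $f$-divergences evaluated at a binary pair, and to exploit the simple structure forced by $U \sim \Rad(1/2)$. First I would fix the joint law of $(A,U)$ and introduce the posterior mean $m(a) \equiv \EE_U[U \mid A = a] = \PP[U=+1\mid A=a] - \PP[U=-1\mid A=a] \in [-1,1]$. By Proposition \ref{prop:correqualschi2} we already know $I_2(A;U) = \EE_A[m(A)^2]$. The task is therefore to sandwich $I_{\mathrm{KL}}(A;U)$ between $\tfrac12\EE_A[m(A)^2]$ and $\EE_A[m(A)^2]$.

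Next I would obtain a clean integral expression for $I_{\mathrm{KL}}(A;U)$ in terms of $m$. Since $U$ is uniform, the conditional law of $U$ given $A=a$ has ``density'' $\tfrac{1+u\,m(a)}{2}$ relative to the uniform law (here $u \in \{\pm1\}$), so the Radon–Nikodym derivative of $\nu_{A,U}$ with respect to $\nu_A \times \nu_U$ at $(a,u)$ is exactly $1 + u\,m(a)$. Plugging $f(t) = t\lg t$ into $D_f$ and summing over $u=\pm1$ with weight $\tfrac12$ each gives
\begin{equation}
I_{\mathrm{KL}}(A;U) = \EE_A\!\left[\, \phi(m(A)) \,\right], \qquad \phi(x) \equiv \frac{(1+x)\lg(1+x) + (1-x)\lg(1-x)}{2} = 1 - H\!\left(\tfrac{1-x}{2}\right),
\end{equation}
where $H$ is the binary entropy in bits. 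So everything reduces to proving the pointwise real-variable inequality $\tfrac12 x^2 \le \phi(x) \le x^2$ for all $x \in [-1,1]$, and then taking $\EE_A$ of all three terms (monotonicity of expectation does the rest, using $I_2(A;U) = \EE_A[m(A)^2]$).

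For the pointwise bound $\phi(x) \le x^2$: expand $\phi$ using $\ln(1\pm x) = \pm x - \tfrac{x^2}{2} \pm \tfrac{x^3}{3} - \cdots$, which telescopes to $\ln 2 \cdot \phi(x) = \sum_{k\ge1} \frac{x^{2k}}{2k(2k-1)} = \frac{x^2}{2} + \frac{x^4}{12} + \cdots$; comparing termwise against $\ln 2 \cdot x^2 = \sum_{k\ge1} \frac{x^{2k}}{?}$... more cleanly: one checks $\psi(x) \equiv x^2 - \phi(x)$ has $\psi(0)=0$, is even, and $\psi'(x) = 2x - \tfrac{1}{2\ln 2}\ln\tfrac{1+x}{1-x} \ge 0$ on $[0,1)$ since $\tfrac{1}{2\ln 2}\ln\tfrac{1+x}{1-x} = \tfrac{1}{\ln 2}(x + \tfrac{x^3}{3}+\cdots) \le 2x$ needs care near $x\to1$ — actually it is cleaner to note $\ln\tfrac{1+x}{1-x} \to \infty$, so I would instead argue directly: $\phi$ is convex, $\phi(0)=0$, $\phi(\pm1)=1 \le 1$, so $\phi(x) \le x^2$ fails to follow from convexity alone; the right route is the power-series comparison $\ln 2\,\phi(x) = \sum_{k\ge1}\frac{x^{2k}}{2k(2k-1)} \le \sum_{k\ge1}\frac{x^{2}}{2k(2k-1)}\cdot x^{2(k-1)} \le x^2\sum_{k\ge1}\frac{1}{2k(2k-1)} = x^2\ln 2$. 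For the lower bound $\phi(x) \ge \tfrac12 x^2$: from the same series, $\ln 2\,\phi(x) \ge \frac{x^2}{2}$, and $\frac12 \ge \frac{\ln 2}{2}$, giving $\phi(x) \ge \frac{x^2}{2\ln 2}\cdot\frac{1}{?}$ — one must track the constant $\ln 2$ carefully: $\ln 2 \,\phi(x) \ge x^2/2$ gives $\phi(x) \ge \frac{x^2}{2\ln 2} > \frac{x^2}{2}$ since $\ln 2 < 1$, which is even stronger than needed. I expect the \textbf{main obstacle} to be purely bookkeeping: keeping the $\lg$ versus $\ln$ conversion factor of $\ln 2$ straight throughout, and verifying the series-coefficient comparison $\frac{1}{2k(2k-1)} \le \ln 2 = \sum_{j\ge1}\frac{1}{2j(2j-1)}$ (true since the full sum equals $\ln 2$ and all terms are positive). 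An alternative, if one prefers to avoid series entirely, is to invoke a standard comparison of $f$-divergences: $D_{\mathrm{KL}} \le D_{\chi^2}$ always (by Jensen / concavity of $\log$), which immediately gives the upper bound, and for the lower bound $D_{\mathrm{KL}} \ge \tfrac12 D_{\chi^2}$ one can cite the known reverse bound valid when the likelihood ratio is bounded (here it lies in $[0,2]$, so $1 + u\,m(a) \in [0,2]$ makes the reverse inequality with constant $\tfrac12$ applicable); I would present the self-contained scalar computation as the primary argument and mention this as a remark.
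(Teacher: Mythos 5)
Your proof is correct and follows essentially the same route as the paper: both reduce the lemma, via the posterior mean $m(A)=\EE[U\mid A]$ and $I_2(A;U)=\EE[m(A)^2]$, to the pointwise scalar inequality $\tfrac{x^2}{2}\le \tfrac{(1+x)\lg(1+x)+(1-x)\lg(1-x)}{2}\le x^2$, which the paper merely asserts (and states with a typo swapping the arguments of the logarithms) and which your power-series identity $\ln 2\cdot\phi(x)=\sum_{k\ge1}x^{2k}/(2k(2k-1))$ correctly establishes, including the $\ln 2$ bookkeeping. One small caveat on your closing remark: the generic comparison $D_{\mathrm{KL}}\le D_{\chi^2}$ holds for KL measured in nats, so it would only yield $I_{\mathrm{KL}}\le I_2/\ln 2$ for the base-2 mutual information used here; your self-contained scalar computation is the argument that actually delivers the stated upper bound.
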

This follows from the following inequalities 
$$\frac{x^2}{2} \leq \frac{1+x}{2} \lg(1-x) + \frac{1-x}{2} \lg(1+x) \leq x^2.$$

\end{document}